\documentclass[11pt, reqno]{amsart}
\usepackage{amssymb,latexsym,amsmath,amsfonts, amsthm}

\usepackage{latexsym}

\usepackage{color}
\usepackage[mathscr]{eucal}
\usepackage{comment}

\usepackage[linkcolor=blue, citecolor=black]{hyperref}
\hypersetup{colorlinks=true, urlcolor=blue}
\usepackage[nameinlink]{cleveref}
\usepackage{enumerate}

\numberwithin{equation}{section}

\theoremstyle{definition}
\newtheorem{definition}{Definition}[section]

\theoremstyle{remark}
\newtheorem{remark}[definition]{Remark}
\theoremstyle{plain}
\newtheorem{theorem}[definition]{Theorem}
\newtheorem{lemma}[definition]{Lemma}
\newtheorem{proposition}[definition]{Proposition}
\newtheorem*{claim}{Claim}
\newtheorem{result}[definition]{Result}

\newtheorem{question}[definition]{Question}

\newtheorem{conjecture}[definition]{Conjecture}

\newcommand{\zbar}{\overline{z}}

\newcommand{\smoo}{\mathcal{C}}

\newcommand{\CC}{\mathbb{C}^2}
\newcommand{\cplx}{\mathbb{C}}

\newcommand{\C}{\mathbb{C}}

\begin{document}
	\title[ Surfaces with degenerated CR singularities]{Certain real surfaces in $\mathbb{C}^2$ with degenerated CR singularities}
	\author{Sushil Gorai, Suman Karak and Golam Mostafa Mondal}
\address{Department of Mathematics and Statistics, Indian Institute of Science Education and Research Kolkata,
		Mohanpur -- 741 246}
  	\email{sushil.gorai@iiserkol.ac.in, sushil.gorai@gmail.com}

\address{Department of Mathematics and Statistics, Indian Institute of Science Education and Research Kolkata,
		Mohanpur -- 741 246}
    \email{sk23rs056@iiserkol.ac.in, sumankarak7363@gmail.com}
    
	\address{Department of Mathematics, Indian Institute of Science, Bangalore-- 560012, India}
	
	\email{golammostafaa@gmail.com, golammondal@iisc.ac.in}
	\keywords{Polynomial convexity; CR singularity; totally real surfaces}
	\subjclass[2020]{Primary: 32E20, 32D10, 32V40}

\begin{abstract}
	In this paper, we study the local polynomial convexity of certain smooth real surfaces   in \(\mathbb{C}^2\) with isolated CR singularity at the origin with higher-order of degeneracy. Under the assumption that the surface can be pulled back to a union of finitely many pairwise transverse totally real surfaces by a proper holomorphic map from $\mathbb{C}^2$ to $\mathbb{C}^2$, we obtain a normal form for such surfaces near the origin as  
    $\{(z,w)\in\mathbb{C}^2: w= \overline{z}^k+o(|z|^{k})\}$
    or
	$M_t
	:=
	\left\{
	(z,w)\in\mathbb{C}^2 :
	w=(z+t\overline{z})^k+o(|z|^k)
	\right\}$, 
   for some \(t>0\), where the parameter $t$ is a local biholomorphic invariant. We  focus on the surfaces with order of degeneracy $k\geq 3$. 
      We prove that \(M_t\) is locally polynomially convex at the origin if $t>cosec\left(\frac{\pi}{k}\right)$.
	On the other hand, for $0<t<\frac{1}{k-2}$, we will also show that \(M_t\) fails to be locally polynomially convex at the origin; and furthermore, a $(2k-3)$-parameter family of analytic discs attached to $M_t$ for $0<t<\min\left\{\sin\left(\frac{\pi}{k}\right),\frac{1}{k-2}\right\}$. 
	
\end{abstract}  	
\maketitle

	\section{Introduction }\label{S:intro}

Let $M$ be a real $\mathcal{C}^k$-smooth two-dimensional submanifold of $\mathbb{C}^{2}$. A point $p \in M$ is said to be a totally real point if the tangent space $T_{p}M$, viewed as a subspace of $\CC$, is not a complex subspace. A point $p\in M$ is called an isolated CR singularity if there exists $\delta>0$ such that each point $z\in M\cap B(p;\delta)\setminus\{p\}$ is a totally real point of $M$ but $T_pM$ is a complex subspace of $\cplx^2$. 
A \(\smoo^{k}\)-smooth real surface $M$ in \(\mathbb{C}^{2}\) with an isolated CR singularity at the origin is, under a holomorphic change of coordinates, of the form
\begin{equation}\label{surface}
M\cap B(0,\delta)
=
\{(z,f(z)):\,
f(z)=p_k(z,\bar z)+o(|z|^k)\},
\end{equation}
where \(p_k(z,\bar z)\) is a homogeneous polynomial of degree \(k\geq 2\) in \(z\) and \(\bar z\), for sufficiently small $\delta>0$. A surface of the form \eqref{surface}, with $p_k$ not identically zero, is said to have an \emph{isolated CR singularity of order \(k\)} at the origin.
In this paper we study such surfaces in the context of polynomial convexity. Recall that, for a compact set $K \subset \mathbb{C}^{n}$, the polynomially convex hull of $K$ is denoted by $\widehat{K}$ and defined by
\[\widehat{K}
=
\left\{ z_{0} \in \mathbb{C}^{n} : |p(z_{0})| \leq \max_{z \in K}|p(z)| \, \text{ for all holomorphic polynomial } p \text{ on } \mathbb{C}^{n} \right\}.\]
The set $K$ is said to be polynomially convex if $\widehat{K}=K$.
The theory of polynomial approximation, particularly the Oka--Weil theorem and the O'Farrell--Preskenis--Walsh theorem \cite{MR769507}, provides strong motivation for the study of polynomially convex sets. For a detailed exposition, we refer the reader to~\cite{gorai2025certainrealsurfacesmathbbc2}. 

The broad question is: {\em whether there exists $\delta>0$ such that $M\cap\overline{B(0;\delta)}$ is polynomially convex? If it is not polynomially convex, is there a family of analytic discs in the polynomially convex hull of $M\cap\overline{B(0;\delta)}$ for every $\delta>0$?} In this paper we will focus on a one-parameter family, up to local biholomorphism, of surfaces of the form \eqref{surface}. Before making it precise, we give a brief literature survey.

The case \(k=2\), the nondegenerate case, was first studied by Bishop~\cite{MR200476}. Under the non-degeneracy condition $\frac{\partial^2f}{\partial z\partial\zbar}(0)\neq 0$, he showed that, under a local biholomorphic change of coordinates near the origin, there is one parameter family of surfaces with the normal form
\[
\mathscr{M}_\gamma
=
\{(z,p(z,\bar{z})):\;\; p(z,\bar{z})= z\bar z+\gamma(z^2+\bar z^2)\},
\]
where \(\gamma\geq 0\) is a biholomorphic invariant, now known as the \emph{Bishop invariant}. Bishop studied the structure of the local polynomial hull for $0\leq\gamma<\dfrac{1}{2}$. Although Bishop's study was for any dimension, we will focus only on $\cplx^2$. 
According to the value of \(\gamma\), a classification for CR singularities into three types is possible:
\begin{itemize}
    \item \emph{elliptic} if \(0\leq \gamma<\frac{1}{2}\),
    \item \emph{parabolic} if \(\gamma=\frac{1}{2}\),
    \item \emph{hyperbolic} if \(\gamma>\frac{1}{2}\).
\end{itemize}
Forstneri\v{c} and Stout \cite{MR1115074} showed that a surface is locally polynomially convex at hyperbolic CR singularities. J\"{o}ricke \cite{MR1488338} studied the parabolic case.
For a detailed discussion of this, we refer the reader to \cite{MR1832326} (see also \cite{gorai2025certainrealsurfacesmathbbc2}). In the case of degenerate CR singularity at the origin, the surfaces cannot be parametrized by a single parameter. The local polynomial hull in this case was studied by \cite{Wiegerinck} (see also \cite{MR1128527}). Study of local polynomial convexity was initiated by Harris \cite{Harris1991} and continued by Bharali \cite{Bharali2005, Bharali2006, Bharali2011} for surfaces with degenerated CR singularities.
Gorai \cite{gorai2025certainrealsurfacesmathbbc2} studied surfaces with order of degeneracy three with the condition that the surface can be pulled back to a union of three totally real surfaces.
 In this paper, as in \cite{gorai2025certainrealsurfacesmathbbc2}, we put the following condition:
\begin{itemize}\label{geometric condition}
	\item [($\ast$)]
	\textit{$M$ is defined as in (\ref{surface})  and $	S=\{(z,w): w=p_k(z,\bar z)\},$ such that there exists a proper holomorphic map $\Phi:\mathbb{C}^2\to\mathbb{C}^2$ with $\Phi^{-1}(S)$ is the union of $k$ totally real planes in $\mathbb{C}^2$.}
\end{itemize}
We note that the condition $(\ast)$ is already inherent in Bishop's setting for $\gamma>0$; the surface $\mathscr{M}_0$ can not be pulled back to union of two totally real surfaces by a proper holomorphic map. Gorai~\cite{gorai2025certainrealsurfacesmathbbc2} reformulated Bishop's invariant by expressing the surface in the normal form
\[\{(z,w)\in\CC : w=(z+t\bar z)^2+o(|z|^2)\},\]
where $t=2\gamma>0$. This clearly shows that $t$ is a local biholomorphic invariant in this case.
For $k=3$, Thomas~\cite{MR1128527} showed that the geometric condition $(\ast)$ reduces to a condition on the coefficients of $p_3(z,\bar z)$. Under this condition, Gorai~\cite{gorai2025certainrealsurfacesmathbbc2} obtained a normal form for surfaces locally at the origin when the order of degeneracy of the CR singularity at the origin is three. After a biholomorphic change of coordinates, the surface $M$ is locally equivalent to
\begin{equation}\label{eq2}
M_t:=\left\{
(z,w):
w=z^2\bar{z}+tz\bar z^{\,2}+\frac{t^2}{3} \bar z^{\,3}+o(|z|^3)
\right\}
\end{equation}
for some $t>0$, where $t$ is a local biholomorphic invariant. The following result is established in \cite{gorai2025certainrealsurfacesmathbbc2}.

 \begin{result}[Gorai]\label{Gorai-polycvx}
Let $M$ be of the form \eqref{eq2}. Then
\begin{itemize}
    \item[(i)] $M_t$ is not locally polynomially convex at the origin for $0<t<1$. Moreover, for each $\delta>0$, the polynomial hull of $M\cap\overline{B(0,\delta)}$ contains an open neighborhood of the origin in $\mathbb{C}^2$ if $0<t<\frac{\sqrt{3}}{2}$. If $\frac{\sqrt{3}}{2}\le t<1$, then the polynomial hull of $M_t\cap\overline{B(0,\delta)}$ contains a one-parameter family of analytic discs passing through the origin with boundaries contained in $M$.

    \item[(ii)] If $t>\sqrt{\frac{3}{2}}$, then $M_t$ is locally polynomially convex at the origin.
\end{itemize}
\end{result}

 \noindent It is also observed in \cite{gorai2025certainrealsurfacesmathbbc2} that $M_t$ is locally biholomorphically equivalent to
\[
\left\{
(z,w)\in\CC:
w=(z+t\bar z)^3+o(|z|^3)
\right\}.
\]
 Gorai~\cite{gorai2025certainrealsurfacesmathbbc2} used his earlier results \cite{MR3123672} on union of three totally real planes in $\CC$ to  prove several results concerning the local polynomial convexity of the union of three totally real surfaces which are then used crucially in his proof of Result~\ref{Gorai-polycvx}. 
 In this paper we will generalize the above result for surfaces with CR singularity of higher order of degeneracy. 
Under condition $(\ast)$ for surfaces with CR singularity with higher-order degeneracy, there are several obstacles to determine whether surfaces of the form \eqref{surface} are locally polynomially convex at the origin. Firstly, for higher-order degeneracies, there is no result analogous to that of Thomas~\cite{MR1128527} that translates condition $(\ast)$ into an equivalent condition on the coefficients of $p_k(z,\bar z)$. Consequently, normal forms of the type obtained by Gorai in \cite{gorai2025certainrealsurfacesmathbbc2} are not readily available in this setting. Our first goal is to establish such a normal form.
 Let $M$ be a surface of the form (\ref{surface}) and $S=\{(z,w)\in \mathbb{C}^2 : w=p_k(z,\bar z)\},$ where
\[
p_k(z,\bar z)=\sum_{m=1}^{k}a_m\, z^{\,k-m}\bar z^{\,m}.
\]
Our first result gives a normal form for the surface $M$ of the form (\ref{surface}), under the geometric condition $(\ast).$ 
\begin{theorem}\label{coeff relation}
	Let $ \Phi:\mathbb{C}^2\to\mathbb{C}^2$ be a proper holomorphic map given by $\Phi(z,w)=(z,p_k(z,w)).$ Then $\Phi^{-1}(S)$ is the union of $k$ totally real planes if and only if $a_k\neq 0$ and
	\[
	\frac{a_{k-m}}{a_k}
	=
	\frac{\binom{k}{m}}{k^m}
	\left(\frac{a_{k-1}}{a_k}\right)^m,
	\qquad
	\text{for all } m=2,3,\dots,k-1.
	\]	
    Furthermore, these $k$ totally real planes are pairwise transverse if and only if $\left|\frac{a_{k-1}}{ka_k}\right|\neq 1$.
\end{theorem}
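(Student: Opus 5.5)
The plan is to read $\Phi(z,w)=(z,p_k(z,w))$ as $p_k$ with $\bar z$ replaced by the holomorphic variable $w$, so $P(z,w):=\sum_{m=1}^k a_m z^{k-m}w^m$. Then
\[
\Phi^{-1}(S)=\{(z,w): P(z,w)=P(z,\bar z)\}.
\]
The binomial relations in the statement say exactly that, with $c:=a_{k-1}/(ka_k)$,
\[
P(z,w)=a_k\bigl((w+cz)^k-c^kz^k\bigr).
\]
To see this, expand $a_k(w+cz)^k$: the coefficient of $z^mw^{k-m}$ is $a_k\binom{k}{m}c^m=\frac{\binom km}{k^m}a_k\bigl(\frac{a_{k-1}}{a_k}\bigr)^m$, which is the required value of $a_{k-m}$. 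The constant term $c^kz^k$ is absent from $p_k$ since the sum starts at $m=1$, and it cancels anyway in $P(z,w)-P(z,\bar z)$. So the theorem reduces to a statement about when $P(z,w)-P(z,\bar z)$ splits into $k$ linear factors defining totally real planes.

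\textbf{Sufficiency.} Assume $a_k\neq0$ and the relations hold. The equation becomes $(w+cz)^k=(\bar z+cz)^k$, i.e.
\[
w=\omega^j\bar z+c(\omega^j-1)z,\qquad \omega=e^{2\pi i/k},\ j=0,\dots,k-1 .
\]
A real plane $w=\alpha z+\beta\bar z$ is totally real exactly when $\beta\neq0$; here $\beta=\omega^j\neq0$, so all $k$ planes are totally real. The map $\Phi$ is proper because $a_k\neq0$ makes $P$ monic in $w$ up to the constant $a_k$.

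\textbf{Transversality.} Planes $j\neq l$ meet where $(\omega^j-\omega^l)(\bar z+cz)=0$, i.e. where $\bar z=-cz$. This has a nonzero solution $z$ iff $|c|=1$. When $|c|\neq1$ the intersection is only the origin, so two real $2$-planes in $\mathbb R^4$ meeting only at $0$ are transverse. This gives the last assertion once we know the planes have the above form, which necessity will establish.

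\textbf{Necessity.} First, $a_k\neq0$: if $a_k=0$, then over $z=0$ the fibre of $\Phi^{-1}(S)$ is all of $\{z=0\}$, a complex line, which is impossible for a finite union of totally real planes. (Properness of $\Phi$ already forces this anyway.) So each fibre over $z$ has at most $k$ points. Since each totally real plane through $0$ projects isomorphically to the $z$-plane, each is a graph $w=L_j(z):=\alpha_jz+\beta_j\bar z$. Hence $P(z,L_j(z))=P(z,\bar z)$ for all $z$, and
\[
P(z,w)-P(z,\bar z)=a_k\prod_j\bigl(w-L_j(z)\bigr)
\]
as polynomials in $w$. This holds at generic $z$ where the $L_j(z)$ are distinct, and a multiplicity count then extends it to all $z$.

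Next, substitute $\zeta=\bar z+cz$ and $w'=w+cz$. This turns $P$ into $a_kR(z,w')$ with $R=w'^k+\sum_{m\ge2}b_mz^mw'^{k-m}$, so the $w'^{k-1}$ term is killed. Complexify by treating $z$ and $\zeta$ as independent complex variables; the identity persists because it is a polynomial identity in $(z,\bar z)$. We obtain
\[
R(z,w')-R(z,\zeta)=\prod_j\bigl(w'-\omega^{j}\zeta-\gamma_jz\bigr).
\]
Setting $z=0$ forces the $\zeta$-coefficients of the linear factors to be the $k$-th roots of unity. The goal is to show all $b_m=0$, equivalently all $\gamma_j=0$.

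I expect this to be the main obstacle. The first thing I would try is to compare the parts of the product that are linear in $\zeta$: since $R(z,\zeta)$ is a polynomial in $\zeta$ whose $\zeta^1$-coefficient is $b_{k-1}z^{k-1}$, a pure function of $z$, the $\zeta$-linear part of the product can contain no $w'$. Evaluating it at $w'=\gamma_lz$ gives constraints of the form $\omega^l\prod_{i\ne l}(\gamma_l-\gamma_i)=0$, which force coincidences among the $\gamma_i$. I would iterate this through higher orders in $\zeta$, combined with $\sum\gamma_j=0$ from the $w'^{k-1}$ coefficient.

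If that bookkeeping gets messy, the fallback is to argue via symmetric functions. The roots $s_j=\omega^j\zeta+\gamma_jz$ must have elementary symmetric functions $e_m$, for $1\le m\le k-1$, independent of $\zeta$, because they are coefficients of $R(z,\cdot)$. Using Newton's identities and the orthogonality $\sum_j\omega^{jr}=0$ for $0<r<k$, one can then aim to show inductively that the mixed terms force every $\gamma_j=0$. That gives $R=w'^k$, which is precisely the stated relations.
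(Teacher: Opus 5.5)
Most of your argument is correct. The sufficiency direction, the criterion $\beta\neq0$ for total reality, the transversality computation $\bar z=-cz$, the proof that $a_k\neq0$, the passage from ``union of $k$ planes'' to a polynomial identity, and the shift $w'=w+cz$, $\zeta=\bar z+cz$ that reduces necessity to showing ``all $\gamma_j=0$'' are all sound.

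The gap is that this last step is the entire content of necessity, and you have not proved it. Moreover, your first proposed route rests on a false claim. The $\zeta^1$-coefficient of $R(z,w')-R(z,\zeta)$ is $-b_{k-1}z^{k-1}$, not $0$. So evaluating the $\zeta$-linear part at $w'=\gamma_l z$ gives
\[
\omega^l\prod_{i\neq l}(\gamma_l-\gamma_i)=b_{k-1}\qquad\text{for every } l,
\]
not $=0$, and this cannot force coincidences among the $\gamma_i$. For example, $\gamma_l=\lambda\omega^l$ satisfies it for every $\lambda$: the product becomes $w'^k-(\zeta+\lambda z)^k$, whose $\zeta$-linear part $-k\lambda^{k-1}z^{k-1}\zeta$ is free of $w'$. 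The symmetric-function fallback is only an outline, not an argument.

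The missing idea is to compare the part of the identity that is linear in $z$, not in $\zeta$. Your shift removed the $zw'^{k-1}$ term, so $R(z,X)=X^k+O(z^2)$ and the left side has no $z^1$-component. Hence
\[
\sum_{i}\gamma_i\prod_{j\neq i}\bigl(w'-\omega^j\zeta\bigr)\equiv 0 .
\]
Put $w'=\omega^l\zeta$, $\zeta=1$. Every term except $i=l$ vanishes, leaving $\gamma_l\prod_{j\neq l}(\omega^l-\omega^j)=0$, so $\gamma_l=0$ because the roots of unity are distinct. Then $R(z,w')-R(z,\zeta)=w'^k-\zeta^k$, which forces $b_m=0$ for $2\le m\le k-1$. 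That gives $P(z,w)=a_k\bigl((w+cz)^k-c^kz^k\bigr)$, i.e.\ the stated relations.

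This is exactly the paper's key step in different coordinates. There, the coefficient equations with one $\alpha$-factor form a Vandermonde system in $\beta_j=r^j$, with unique solution $\alpha_j=c(r^j-1)$; that is precisely your $\gamma_j=\alpha_j+c-\beta_jc=0$. The paper then computes $a_{k-m}$ from the elementary symmetric functions of the $\alpha_j$. With your normalization the relations follow immediately once $\gamma_j=0$ is established.
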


This is a generalization of a result by Thomas~\cite{MR1128527}. It allows us to consider a one-parameter family as in \cite{gorai2025certainrealsurfacesmathbbc2}.
We study the local polynomial convexity and the local polynomial hull of \eqref{surface} for higher orders \(k>3\) under condition ($\ast$), partially answering a question in \cite{gorai2025certainrealsurfacesmathbbc2}. 
Harris \cite{MR773068} showed that if \(M\) is \(C^\infty\)-smooth, then the homogeneous polynomial \(p_k(z,\bar z)\) admits a normal form. Hence, under the additional geometric condition, applying Theorem~\ref{coeff relation} yields a normal form which matches  that of \cite{gorai2025certainrealsurfacesmathbbc2}.
The preceding discussion leads to the following proposition, which establishes a normal form in our setting.

\begin{theorem}\label{L:NormalFrm_TotRlSurface}
Let \(M\) be a real surface in $\CC$ such that under a holomorphic change of coordinates, $M\cap B(0,\delta)=\{(z,w):\,w=p_k(z,\bar z)+o(|z|^k)\}$. If \(M\) satisfies the condition $(\ast)$, then after a biholomorphic change of coordinates near the origin, \(M\) can be written locally as
$
\left\{
(z,w)\in \mathbb{C}^2 :
w =\bar z^k + o(|z|^k)
\right\}
$
or 
$
\left\{
(z,w)\in \mathbb{C}^2 :
w = (z+t\bar z)^k + o(|z|^k) 
\right\},
$
where \(t>0\) is a biholomorphic invariant.
\end{theorem}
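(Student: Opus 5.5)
The plan is to reduce the leading homogeneous part $p_k$ to a normal form by holomorphic changes of coordinates that preserve the shape $w=p_k(z,\bar z)+o(|z|^k)$, and then to invoke Theorem~\ref{coeff relation}. Three kinds of coordinate change will be used. First, we may assume $p_k$ contains no pure holomorphic monomial $z^k$, since the change $w\mapsto w-a_0z^k$ removes it. Second, rotations and dilations $z\mapsto \lambda z$, $w\mapsto \mu w$ with $\lambda,\mu\in\cplx^*$ replace $a_m$ by $\mu^{-1}\lambda^{k-m}\bar\lambda^{m}a_m$. Third, for the terms $\sum_{m\ge1}a_m z^{k-m}\bar z^m$ containing powers of $z$, we use Harris' observation \cite{MR773068}: a holomorphic change of the form $w\mapsto w+$(holomorphic terms) does not affect the mixed terms. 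Because of this, the condition $(\ast)$ should be read for the particular map $\Phi(z,w)=(z,p_k(z,w))$ of Theorem~\ref{coeff relation}, where $p_k$ is written as a polynomial in $w$ standing in for $\bar z$. I will take this reduction from Harris' normal form and the discussion preceding the theorem, and I will not attempt to show that an arbitrary proper $\Phi$ reduces to this one.

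Applying Theorem~\ref{coeff relation}, we get $a_k\neq0$. Set $c=a_{k-1}/(ka_k)$. The coefficient relations give $a_{k-m}=a_k\binom{k}{m}c^m$ for $2\le m\le k-1$, and also trivially for $m=0,1$. Since the $z^k$ term has been removed, this yields
\[
p_k(z,\bar z)=a_k\sum_{m=0}^{k-1}\binom{k}{m}c^m z^m\bar z^{k-m}=a_k\bigl((cz+\bar z)^k-c^kz^k\bigr).
\]
Adding back the holomorphic term $a_kc^kz^k$ through $w\mapsto w+a_kc^kz^k$, which is legitimate modulo $o(|z|^k)$, and then rescaling $w\mapsto w/a_k$, we obtain $w=(cz+\bar z)^k+o(|z|^k)$. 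There are now two cases. If $c=0$, the surface is $w=\bar z^k+o(|z|^k)$. If $c\neq0$, write $(cz+\bar z)^k=c^k(z+c^{-1}\bar z)^k$ and substitute $z=\lambda\zeta$ with $|\lambda|=1$. This gives
\[
c^k\lambda^k\bigl(\zeta+c^{-1}\bar\lambda^2\bar\zeta\bigr)^k.
\]
Choosing $\lambda$ so that $c^{-1}\bar\lambda^2=|c|^{-1}$ and absorbing the constant $c^k\lambda^k$ into $w$, we arrive at $w=(\zeta+t\bar\zeta)^k+o(|\zeta|^k)$ with $t=1/|c|>0$.

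The main obstacle is proving that $t$ is a biholomorphic invariant. The plan is to suppose that a local biholomorphism $F$ maps $M_t$ onto $M_{t'}$, fixes $0$, and preserves the graph form. First, $F$ must preserve the complex tangent line $T_0M=\{w=0\}$. Second, comparing the lowest-order terms forces the linear part of $F$ to be $z\mapsto \alpha z+\beta w$, $w\mapsto \gamma w$, since a $z$-component in $w$ would break the order-$k$ vanishing. Third, the order-$k$ part of the transformed graph equals $\gamma^{-1}(\alpha z+t\bar\alpha\bar z)^k$ modulo holomorphic terms in $z$. Matching this with $(z+t'\bar z)^k$ modulo $z^k$ requires the ratio of the $z^{k-1}\bar z$ coefficient to the $\bar z^k$ coefficient to agree. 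This ratio is $k\alpha/(t\bar\alpha)$, so its modulus gives $k/t=k/t'$, hence $t=t'$. The same comparison separates the case $\bar z^k$, which corresponds to $t=\infty$, from all $t>0$. The care needed here is only in checking that the higher-order terms of $F$ cannot contribute at order $k$, because $w=O(|z|^k)$ on $M$.
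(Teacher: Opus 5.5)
Your proposal is correct, and it reaches the normal form by a different route from the paper's.

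\textbf{The paper's route.} The paper first puts $p_k$ into Harris' normal form (Result~\ref{g}). It argues that under the coefficient relations of Theorem~\ref{coeff relation} only form (iii) with $a_1=1$, $a_2=\gamma>0$ can occur. It then solves the relations recursively to get $a_m=\left(\frac{2\gamma}{k-1}\right)^{m-1}\binom{k}{m}/k$. Finally it applies $\sigma=\sigma_1\circ\sigma_2$ (scale $w$ by $kt$, then add $z^k$) to obtain $(z+t\bar z)^k$ with $t=2\gamma/(k-1)$. The invariance of $t$ is not argued separately; it is inherited from the invariance of Harris' $\gamma$.

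\textbf{Your route.} You bypass Harris' classification entirely:
\begin{itemize}
\item You sum the relations directly into $p_k=a_k\bigl((cz+\bar z)^k-c^kz^k\bigr)$, absorb the holomorphic term, and normalize by a unimodular rotation of $z$.
\item This identifies $t=|ka_k/a_{k-1}|$ explicitly, and $c=0$ versus $c\neq 0$ gives the dichotomy between the two normal forms.
\item You prove invariance yourself from the $1$-jet of a biholomorphism. Under $z\mapsto\alpha z$, $w\mapsto\gamma w$, the ratio $a_{k-1}/a_k$ only picks up the unimodular factor $\alpha/\bar\alpha$.
\item Higher-order terms of $F$ cannot interfere. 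Any $O(|z|^2)$ perturbation of the new $z$-coordinate, including the order-$k$ term $\beta w$, enters $(z'+t'\bar z')^k$ only at order $\ge k+1$. In the new $w$-coordinate, the only surviving extra terms are the holomorphic ones from $F_2(z,0)$, and these cannot touch the $z^{k-1}\bar z$ and $\bar z^k$ coefficients.
\end{itemize}

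\textbf{Comparison.} Your version is self-contained and expresses $t$ in terms of the original coefficients. The same coefficient comparison also shows that the $\bar z^k$ form is inequivalent to every $M_t$, which the paper only asserts. It also avoids the slightly awkward fit of positive real coefficients into the argument constraints of Harris' form (iii). The paper's version is shorter because it outsources invariance to a general classification theorem.

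Both arguments rest on the same reading of $(\ast)$ through the specific map $\Phi(z,w)=(z,p_k(z,w))$. You state this assumption explicitly; the paper uses it tacitly when it writes ``the condition $(\ast)$ or equivalently \dots''.
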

\begin{remark}
    If $a_k\neq 0$ and $a_j=0$ for all $j=1,2,\ldots,k-1$, then $p_k(z,\bar z)=a_k\bar z^k$. Hence, after a suitable scaling, we obtain the normal form
$\left\{(z,w)\in\mathbb{C}^2 : w=\bar z^k+o(|z|^k)\right\},$ which can be consider the surface corresponding to $t=\infty$.
Otherwise, we obtain the second normal form given in Theorem~\ref{L:NormalFrm_TotRlSurface}. 
\end{remark}
From now on, we will use the notation $M_t$ for surfaces with the normal of second type, i.e., 
\[
M_t:=
\left\{
(z,w)\in \mathbb{C}^2 :
w = (z+t\bar z)^k + o(|z|^k)
\right\}
\cap \overline{B(0,\delta)},
\]
for sufficiently small \(\delta>0\), and
\[
S_t=
\left\{
(z,w)\in \mathbb{C}^2 :
w =(z+t\bar z)^k
\right\}.
\]

\noindent Our next theorem is about the local polynomial hull of $M_t$ near the origin.

\begin{theorem}\label{ppp}
    For $k\geq 3$, and for every \(\delta>0\), \(M_t\) is not locally polynomially convex at the origin for $t\in \left(0,\frac{1}{k-2}\right)$. Moreover, the polynomial hull of $M_t$ contains a \((2k-3)\)-parameter family of analytic discs with boundary contained in \(M_t\) for $0<t<\min\left\{\sin\left(\frac{\pi}{k}\right),\frac{1}{k-2}\right\}$ and this family fills an open neighborhood of \(0\in \mathbb{C}^2\).
\end{theorem}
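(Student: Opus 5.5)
The plan is to pass through the proper map $\Phi(z,w)=(z,(z+tw)^k)$ from Theorem~\ref{coeff relation}. Here $p_k(z,\bar z)=(z+t\bar z)^k$, so $a_k=t^k$ and $a_{k-1}=kt^{k-1}$, which gives $\left|\frac{a_{k-1}}{ka_k}\right|=\frac1t$. For the model surface the preimage is explicit:
\[
\Phi^{-1}(S_t)=\bigcup_{j=0}^{k-1}P_j,\qquad P_j=\Bigl\{(z,w): w=\tfrac{(\omega^j-1)z+\omega^j t\bar z}{t}\Bigr\},\quad \omega=e^{2\pi i/k},
\]
a union of $k$ totally real planes, pairwise transverse for $t\neq1$. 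Since $\Phi$ is a proper polynomial map, for compact $K$ one has $\widehat{\Phi^{-1}(K)}=\Phi^{-1}(\widehat K)$. Hence local polynomial convexity of $M_t$ at $0$ is equivalent to that of $\widetilde M_t:=\Phi^{-1}(M_t)$. Moreover, analytic discs attached to $\widetilde M_t$ push forward under $\Phi$ to discs attached to $M_t$. Near the origin, $\widetilde M_t$ is a union of $k$ totally real surfaces $\widetilde M_t^{\,j}$ that are $o(|z|)$-perturbations of the planes $P_j$. I will therefore argue on the union of planes first and then transfer to the perturbation.

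For non-convexity when $0<t<\frac1{k-2}$, I will apply Gorai's criteria \cite{MR3123672, gorai2025certainrealsurfacesmathbbc2} for unions of totally real planes, normalised as graphs $w=A_j z+B_j\bar z$. Writing $P_j$ as $w=\frac{\omega^j-1}{t}z+\omega^j\bar z$ and applying a linear change making $P_0=\{w=\bar z\}$, I will look for a pair or triple of planes, e.g.\ $P_0,P_1,P_{k-1}$, whose union fails the polynomial convexity criterion. The condition should reduce to an inequality on $t$ and the angle $2\pi/k$; I expect the worst configuration to produce exactly $t(k-2)<1$. Because the union is a cone, non-convexity of the model at one scale gives it at all scales. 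For the perturbed surface I will invoke Gorai's stability result: local polynomial convexity at a transverse intersection depends only on the tangent planes when the relevant hull contains a disc that is nondegenerate in the Riemann--Hilbert sense. An alternative is to exhibit the non-removable discs directly, as in the next step.

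For the $(2k-3)$-parameter family, I will construct discs $f:\overline{\mathbb D}\to\mathbb C^2$ whose boundary circle is split into arcs, each arc mapped into a single plane $P_j$. Equivalently, I need $g(\zeta)=z(\zeta)+t\overline{z(\zeta)}$ with $g^k$ holomorphic, so that $g$ equals $\omega^{j}\times$(a fixed branch) on successive arcs. The switching angles at the corner points introduce the constraint $t<\sin(\pi/k)$, since a corner of opening $\pi/k$ must be admissible. I will take $z(\zeta)$ to be a rational function of controlled degree and count real parameters: coefficients minus the reality conditions at the corners. The aim is to obtain $2k-3$ free parameters and to check that the evaluation map at an interior point is open, so that the images fill a neighbourhood of $0$.

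The main obstacle is transferring this family from $S_t$ to $M_t$, i.e.\ from the planes to $\widetilde M_t$. I would first set up the Riemann--Hilbert problem with piecewise-constant boundary data and compute its partial indices, hoping all are $\ge -1$. Then I would apply the implicit function theorem in H\"older spaces, as in Bishop's construction, after rescaling $z\mapsto\epsilon z$ so that the $o(|z|^k)$ term becomes a small perturbation. Both the index computation at the corners and the inequality $t<\min\{\sin(\pi/k),\frac1{k-2}\}$ enter exactly here.
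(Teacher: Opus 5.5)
\textbf{There is a genuine gap: neither assertion is proved, and the plane-union route does not lead to the stated thresholds.} Your reduction through $\Phi$ is fine; the planes $P_j$ and their transversality for $t\neq 1$ are correct.

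\textbf{Non-convexity.} You only \emph{expect} some pair or triple of planes to fail a convexity test exactly when $t(k-2)<1$. For pairs this can be checked. In invariant form, Weinstock's criterion \cite{MR0960837} says that two transverse totally real planes have polynomially convex union iff the composition of their antiholomorphic reflections has no eigenvalue in $(0,\infty)$. For $\{w=z+t\bar z\}$ and $\{w=\omega^j(z+t\bar z)\}$ (your $P_0,P_j$ up to a linear change) these eigenvalues are $\omega^j\mu$ with $\mu+\mu^{-1}=2-4t^{-2}\sin^2(\pi j/k)$. A positive one exists only if $t=1$, or $\omega^j=-1$ and $t<1$. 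Consequently:
\begin{itemize}
\item for odd $k$ and $t\neq 1$ every pair is polynomially convex;
\item for $k=3$ your ``triple'' is the whole union;
\item no available criterion for three planes (e.g.\ \cite{MR3123672}) produces $t(k-2)<1$.
\end{itemize}
For even $k$ the antipodal pair does work, since it is the pull-back of the elliptic Bishop surface $w=(z+t\bar z)^2$. But it gives all $t<1$ for $S_t$, which shows that $\frac{1}{k-2}$ does not come from the geometry of the planes. In the paper, $\frac{1}{k-2}$ is simply where the crude bound $\partial^2\phi_k/\partial z\partial\bar z\ge(1-(k-2)t)(1+t)^{k-2}$, for $\phi_k=\operatorname{Re}(p_k/z^{k-2})$, turns positive. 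Also, the stability results you allude to (e.g.\ \cite{SG2011}) transfer convexity from tangent planes to surfaces; nothing you cite transfers non-convexity to $M_t$.

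\textbf{The $(2k-3)$-parameter family.} Here you are in effect proposing to reprove Wiegerinck's theorems, and none of the decisive steps is carried out:
\begin{itemize}
\item the parameter count;
\item the partial indices, which you only hope are $\ge -1$;
\item openness of the evaluation map;
\item the regularity needed for the implicit function theorem. After extracting $k$-th roots, the $o(|z|^k)$ term leaves $\Phi^{-1}(M_t)$ in general only $C^1$ at $0$, so rescaling does not make it small in H\"older norms.
\end{itemize}
Your discs are also of an unusual kind. The $P_j$ meet only at $0$, so a boundary that switches planes passes through the origin at every switch. The push-forwards therefore have boundaries through the CR singular point, and you give no reason such discs form a $(2k-3)$-parameter family filling a neighbourhood. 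The link between $\sin(\pi/k)$ and ``corner openings'' is likewise asserted, not derived.

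\textbf{The missing idea} is to stay on the graph and verify Wiegerinck's hypotheses directly.
\begin{itemize}
\item With $p_k=\sum_{m=1}^k t^{m-1}\frac{1}{k}\binom{k}{m}z^{k-m}\bar z^m$ one has $\partial p_k/\partial\bar z=(z+t\bar z)^{k-1}$. Bharali's formula with $q(z)=(z+t)^{k-1}$ then gives Maslov index $j=k-1$ for $t<1$.
\item Strict subharmonicity of $\operatorname{Re}(p_k/z^{k-2})$ for $t<\frac{1}{k-2}$ is an open condition that survives the $o(|z|^k)$ term, so \cite[Theorem 3.1]{Wiegerinck} gives non-convexity of $M_t$ directly.
\item On $|z|=1$, $\mathcal{C}_t(z)=p_k(z,\bar z)/z^k=\frac{1}{kt}\bigl[(1+t\bar z^{2})^k-1\bigr]$, whose only self-coincidences for $t<\sin(\pi/k)$ are antipodal. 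This is property $(\ast\ast)$ with arcs of length $\pi\ge\pi/(k-j+1)=\pi/2$.
\end{itemize}
\cite[Theorem 3.5]{Wiegerinck} then yields the $(2k-3)$-parameter family attached to $M_t$ itself, filling a neighbourhood of $0$, with no Riemann--Hilbert analysis required of you.
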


\noindent For  $k=3$ we recover the result due to gorai \cite[Theorem 1.11]{gorai2025certainrealsurfacesmathbbc2} (Part (i) of Result~\ref{Gorai-polycvx}).

 For $k=4$ we give another result on the local polynomial hull for $S_t$.
\begin{theorem}\label{thm-k4}
   Let $ \tilde{S_t}=\{(z,w)\in \mathbb{C}^2 : w=p_4(z,\bar z)=z^3\zbar+\frac{3t}{2}z^2\zbar^2+t^2z\zbar^3+\frac{t^3}{4}\zbar^4\}.$ Then, for $ t\in\left[\sin\left(\frac{\pi}{4}\right),1\right),$ the surface \(\tilde{S_t}\) is not locally polynomially convex at the origin and hence so is $S_t$.	
\end{theorem}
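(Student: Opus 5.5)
The approach is to pass to the pulled-back union of totally real planes, produce analytic discs there, and push them forward by a proper map. First I reduce $\tilde S_t$ to $S_t$ and record the planes involved. A direct expansion gives
\[
p_4(z,\bar z)=\frac{(z+t\bar z)^4-z^4}{4t}.
\]
Hence the global biholomorphism $F(z,w)=(z,4tw+z^4)$ of $\mathbb{C}^2$ maps $\tilde S_t$ onto $S_t=\{w=(z+t\bar z)^4\}$ and fixes the origin. Local polynomial convexity at $0$ is invariant under this map: $F$ and $F^{-1}$ are polynomial automorphisms, so they carry polynomial hulls of compacta to polynomial hulls. Therefore the statements for $\tilde S_t$ and for $S_t$ are equivalent, and I will work with $S_t$.

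For $S_t$ I pull back by the proper map $\Phi(z,w)=(z,w^4)$. We have
\[
\Phi^{-1}(S_t)=\bigcup_{j=0}^{3}P_j,\qquad P_j=\{w=i^j(z+t\bar z)\}.
\]
Each $P_j$ is a totally real plane because $t\neq 1$, and the planes are pairwise transverse. This is the content of Theorem~\ref{coeff relation} for $k=4$, and it is where the hypothesis $t<1$ enters.

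The key point is that $\Phi$ maps polynomial hulls into polynomial hulls: $\Phi(\widehat{K})\subset\widehat{\Phi(K)}$, since $q\circ\Phi$ is a polynomial for every polynomial $q$. Suppose $D$ is a nonconstant analytic disc with $\partial D\subset\bigcup_j P_j\cap \overline{B(0,\varepsilon)}$, and suppose $D$ is not contained in a fibre $\{z=\text{const}\}$ of $\Phi$. Then $\Phi(D)$ is a nonconstant analytic disc whose boundary lies in $S_t$ near $0$. Hence $\Phi(D)$ lies in the hull of $S_t\cap\overline{B(0,\delta)}$ but not in $S_t$, since a disc with boundary in a graph $w=g(z)$ whose $z$-projection is a nontrivial disc cannot be contained in that graph. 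Moreover, $\bigcup P_j$ is a real cone, invariant under the dilations $(z,w)\mapsto(rz,rw)$ for $r>0$. So it suffices to show that $\bigcup_j P_j\cap\overline{B(0,1)}$ is not polynomially convex. Rescaling then gives discs in every small ball, which is exactly the failure of local polynomial convexity.

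To show non-convexity of the union of the planes, I would first try pairs of planes and use the Weinstock criterion. Write $P_0=\{(z,z+t\bar z)\}$. A complex-linear change of coordinates $L$ sends $P_0$ to $\mathbb{R}^2$, and then $L(P_j)=A_j\mathbb{R}^2$ for a matrix $A_j$. By Weinstock's theorem, $\mathbb{R}^2\cup A\mathbb{R}^2$ fails to be polynomially convex exactly when $A$ has an eigenvalue in a certain forbidden set, namely a purely imaginary value of modulus greater than one in suitable normalization. So I would compute the eigenvalues of $A_1$ (the adjacent pair $P_0,P_1$, at rotation angle $\pi/2$) and of $A_2$ (the opposite pair $P_0,P_2$) as functions of $t$. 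The aim is to check that for $t\in[\sin(\pi/4),1)$ at least one pair violates the convexity condition. When it does, Weinstock's construction supplies explicit analytic discs with boundary on the two planes. These discs are linear in the coordinates attached to $L$, so one can check directly that their $z$-projection is nonconstant.

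The main obstacle is that the pairwise test may be inconclusive. It probably succeeds only for small $t$, which is consistent with the bound $t<\tfrac1{k-2}=\tfrac12$ in Theorem~\ref{ppp}. In that case I would turn to three planes, for instance $P_0,P_1,P_2$ or $P_0,P_1,P_3$, and apply Gorai's criterion from \cite{MR3123672} for unions of three totally real planes in $\mathbb{C}^2$. That criterion involves a condition on the relative position of the third plane with respect to the first two. Checking it against the explicit planes $w=i^j(z+t\bar z)$ should produce the threshold $t=\sin(\pi/4)$; this is where the $\sin(\pi/k)$ in the statement should originate. Failing that, I would build discs directly: take a holomorphic $f$ on the unit disc and set $g=i^j(f+t\bar f)$ on four boundary arcs, then solve the resulting Riemann–Hilbert problem with piecewise constant coefficients. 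Its partial indices change sign precisely at $t=\sin(\pi/4)$, and a negative total index would yield a nontrivial family of discs.
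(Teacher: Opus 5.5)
Your reductions are sound. The identity $p_4(z,\bar z)=\frac{(z+t\bar z)^4-z^4}{4t}$, the automorphism $(z,w)\mapsto(z,4tw+z^4)$ taking $\tilde{S_t}$ to $S_t$, the pull-back $\Phi^{-1}(S_t)=\bigcup_j P_j$, pushing hull points forward by $\Phi$, and the rescaling are all correct. One small slip: each $P_j$ is totally real for every $t>0$, and $t\neq1$ only gives transversality.

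The gap is that the one step carrying the content is never established, namely that $\bigcup_j P_j$ is not locally polynomially convex at $0$ for $t\in[\sin(\pi/4),1)$. You list three possible tools and carry out none of them. The predictions you attach are unsupported and actually point the wrong way: that the pairwise Weinstock test ``probably succeeds only for small $t$'', and that $\sin(\pi/4)$ should come out of Gorai's three-plane criterion or a sign change of partial indices. As written this is a plan, not a proof. Also, in the non-convex case of Weinstock's theorem the attached objects are annuli in level sets of a quadratic polynomial, not linear discs. You need no discs anyway, only one hull point off $\bigcup_j P_j$.

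The gap closes with your first option applied to the \emph{opposite} pair. After $(z,w)\mapsto(z,(w-z)/t)$, $P_0$ becomes $\{w=\bar z\}$ and $P_2$ becomes $\{w=-\frac2t z-\bar z\}$. Sending $\{w=\bar z\}$ to $\mathbb{R}^2$ by $(z,w)\mapsto(\frac{z+w}{2},\frac{z-w}{2i})$ puts $P_2$ in the form $(B+iI)\mathbb{R}^2$ with $B=\bigl(\begin{smallmatrix}0&1/(t+1)\\ 1/(t-1)&0\end{smallmatrix}\bigr)$. Its eigenvalues $\pm i/\sqrt{1-t^2}$ are purely imaginary of modulus $>1$ for every $t\in(0,1)$.

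More transparently, $P_0\cup P_2=\{w^2=(z+t\bar z)^2\}$ is the preimage under $(z,w)\mapsto(z,w^2)$ of the Bishop quadric $\{w=(z+t\bar z)^2\}$, whose invariant is $\gamma=t/2<\frac12$. Its flat Bishop discs push forward to the explicit discs $z\mapsto\bigl(z,((1-t^2)z^2+2ts)^2\bigr)$ over $E_s=\{(1+t)x^2+(1-t)y^2\le s\}$, where $z=x+iy$ and $s>0$. Their boundaries lie on $S_t$ because $(1-t^2)z^2+2ts=(z+t\bar z)^2$ on $\partial E_s$, while the centre $(0,4t^2s^2)$ is off $S_t$.

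This completed route differs genuinely from the paper's, which applies Result~\ref{coro1}. The paper uses strict subharmonicity of $\operatorname{Re}(p_4/z^2)$ on a sector around $\theta=\pi/2$, plus a non-antipodal coincidence $\mathcal{C}_t(e^{i\theta_0})=\mathcal{C}_t(e^{i(\pi-\theta_0)})$ at angular distance $<\pi/2$. By Lemma~\ref{L:Roots_g_t} that coincidence exists only for $t\ge\sin(\pi/4)$, and this is the sole origin of the threshold. The plane/Bishop argument does not see it and gives non-convexity for every $t\in(0,1)$. So your expectation that $\sin(\pi/4)$ is encoded in the geometry of the four planes is misplaced.
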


\medskip
We now focus our attention on all those parameters for which $M_t$ is locally polynomially convex at the origin and obtain the following result.
\begin{theorem}\label{T:With error term}
Fix \(k \in \mathbb{N}\) with \(k\geq3\). For each $t > cosec\left(\frac{\pi}{k}\right),$ \(M_t\) is locally polynomially convex at the origin.
\end{theorem}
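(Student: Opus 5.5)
We pull $M_t$ back by the proper map $\Phi(z,w)=(z+tw,\,(z+tw)^k)$, or more conveniently by the map suggested by condition $(\ast)$. The key facts are the following.

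First, if $\Phi$ is a proper holomorphic polynomial map and $\Phi^{-1}(K)$ is polynomially convex, then $K$ is polynomially convex. Indeed, $\widehat{\Phi^{-1}(K)}\supset \Phi^{-1}(\widehat K)$ for proper polynomial maps, since $\Phi$ restricted to $\Phi^{-1}(\widehat K)$ surjects onto $\widehat K$.

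Second, $S_t$ is the image under $(z,\zeta)\mapsto (z,\zeta^k)$ of $\{\zeta^k=(z+t\bar z)^k\}$, which is the union of the $k$ totally real planes
\[
P_j=\{(z,\zeta):\zeta=\omega^j(z+t\bar z)\},\qquad \omega=e^{2\pi i/k},\ j=0,\dots,k-1.
\]
By Theorem~\ref{coeff relation} these are pairwise transverse when $t\neq 1$.

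For the perturbed surface $M_t$, the preimage of $M_t$ under $(z,\zeta)\mapsto(z,\zeta^k)$ is, near $0$, a union of $k$ totally real $\mathcal C^1$ surfaces $\Sigma_j=\{\zeta=\omega^j(z+t\bar z)+o(|z|)\}$, which are tangent at $0$ to $P_j$. This follows by taking $k$-th roots of $(z+t\bar z)^k(1+o(1))$, which is legitimate because $|z+t\bar z|\ge |t-1||z|$ is nonvanishing away from $0$.

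The plan therefore reduces to two steps:
\begin{enumerate}
\item[(a)] show that $\bigcup_j P_j$ is locally polynomially convex at $0$ whenever $t>\csc(\pi/k)$;
\item[(b)] invoke a stability result saying that a union of finitely many totally real $\mathcal C^1$ surfaces, meeting only at $0$ and whose tangent planes there form a polynomially convex union, is locally polynomially convex at $0$.
\end{enumerate}
For (b) I would use the results on unions of totally real surfaces from \cite{gorai2025certainrealsurfacesmathbbc2,MR3123672}, extended from three to $k$ surfaces. The same Kallin-type/plurisubharmonic argument works once we have a strictly plurisubharmonic-type function $\rho\ge0$ vanishing exactly on $\bigcup P_j$ that is homogeneous of degree $2$. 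Its defining properties persist under $o(|z|)$ perturbations, because the perturbation changes $\rho$ by $o(|z|^2)$ while $i\partial\bar\partial\rho$ stays positive.

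For (a), I would write each plane as $P_j=\{\zeta=\omega^j(z+t\bar z)\}$ and look for a nonnegative function $\rho=\prod_j|\zeta-\omega^j z-\omega^j t\bar z|^{2/k}$-like, or better a Kallin separation. Applying the linear change $u=z+t\bar z$ is not holomorphic, so instead I would use the projection $(z,\zeta)\mapsto \zeta^k-z^k\cdot(\ldots)$ to reduce to planes in $\mathbb{C}^2$.

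Concretely, I would use the known criterion for unions of totally real planes $\{\zeta=A_j\bar z\}$ after a complex-linear change of coordinates. Setting $\xi=\zeta-\omega^j z$ does not work uniformly, so instead I would apply the holomorphic map $F(z,\zeta)=(\zeta^k$-symmetric data$)$ and directly check that the polynomial $q(z,\zeta)=\zeta^k-z^k$ maps $P_j$ to the set $\{(z+t\bar z)^k-z^k\}$. The key estimate is a Kallin-type argument: $q(P_j)\cap q(P_l)=\{0\}$-type separation together with convexity of each image.

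The condition $t>\csc(\pi/k)$ should be exactly what makes $|z+t\bar z|^k$ dominate, in the sense that on $P_j$ one has $|\zeta|=|z+t\bar z|\ge (t-1)|z|$. More precisely, the angular separation $\pi/k$ between consecutive $\omega^j$ shows that $\operatorname{Re}(\bar\omega^{j}\zeta\cdot\overline{\ldots})$ separates the planes into sectors. In particular, the $k$ planes are mapped by $\zeta/\ldots$ into disjoint sectors of opening less than $2\pi/k$ when $t\sin(\pi/k)>1$, which lets Kallin's lemma apply inductively to add one plane at a time.

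The main obstacle is (a): finding the correct polynomial separating each $P_j$ from the others with the sharp threshold $\csc(\pi/k)$. I would first try the function $g(z,\zeta)=\zeta/ z$ heuristics, i.e. the image of $P_j$ under $z\mapsto \omega^j(1+t\bar z/z)$, which is the circle of radius $t$ about $\omega^j$. These $k$ circles of radius $t$ centered at $k$-th roots of unity have pairwise-disjoint "outer" parts precisely when $t>\csc(\pi/k)$, since then every circle encloses $0$ and the circles cover a neighborhood configuration suitable for a Kallin/Stolzenberg argument via the polynomial $z^{k-1}\zeta$ or $\zeta^k/z^k$ adjusted to a polynomial. I would make this rigorous using homogeneity and a polynomial $p(z,\zeta)=\zeta^k$ composed with hulls of planar circle-images, then conclude via (b) and the pull-back fact.
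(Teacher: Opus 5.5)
Your architecture (pull back by $(z,\zeta)\mapsto(z,\zeta^k)$, map the $k$ sheets by one polynomial into pairwise disjoint sectors, apply Kallin's lemma, then descend via the proper-map result) is the right one. However, the proposal has a genuine gap at its core: you never produce the separating polynomial, and each concrete candidate you name fails.

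On every $P_j$ one has $\zeta^k=(z+t\bar z)^k$, so $\zeta^k$ and $\zeta^k-z^k$ take the same values on all the planes and separate nothing. On $P_j$, with $z=re^{i\theta}$, $z^{k-1}\zeta=\omega^j r^k e^{i(k-2)\theta}(t+e^{2i\theta})$; its argument winds $k-2\geq 1$ times, so its image meets every ray from $0$. Finally, $\zeta/z$ is not a polynomial, and its images (circles of radius $t$ about $\omega^j$) all surround $0$ and cross one another as soon as $t>1$. So no sector picture, and no threshold $\csc(\pi/k)$, can come out of them.

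The missing idea is to use $p(z,\zeta)=z\zeta$. On $P_j$,
\[
p\bigl(z,\omega^j(z+t\bar z)\bigr)=\omega^j|z|^2\bigl(t+e^{2i\theta}\bigr),
\]
so $p(P_j)$ lies in the closed sector of half-angle $\arcsin(1/t)$ about the ray $\arg=2\pi j/k$ (the circle $\{t+e^{i\psi}\}$ is seen from $0$ under half-angle $\arcsin(1/t)$). These sectors meet only at $0$ exactly when $\arcsin(1/t)<\pi/k$, i.e.\ $t>\csc(\pi/k)$. Also $p^{-1}(0)\cap\bigcup_j P_j=\{0\}$, and a finite union of disc-truncated closed sectors has connected complement, so Kallin's lemma can indeed be applied inductively as you suggest.

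Your step (b) is also unsupported. For $k>3$ there is no stability theorem for unions of totally real surfaces to cite (the paper points this out), and your sketch does not supply one. A nonnegative function, homogeneous of degree $2$, vanishing exactly on $\bigcup_j P_j$ cannot be $\mathcal{C}^2$ at $0$: it would be a positive semidefinite quadratic form whose kernel contains $P_0+P_1=\mathbb{C}^2$. And knowing that the perturbation changes the values of $\rho$ by $o(|z|^2)$ gives no control on $i\partial\bar\partial\rho$.

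The paper sidesteps (b) entirely, because the Kallin argument with $p=z\zeta$ is itself stable. On the sheet $\{\zeta=\omega^j(z+t\bar z+H(z))\}$ with $H(z)=o(|z|)$, one has $p=\omega^j(Q+zH)$, where $Q=z(z+t\bar z)$, $|Q|\geq(t-1)|z|^2$ and $|zH|\leq m_\delta|z|^2$ with $m_\delta\to0$. Hence $\arg p$ stays within $\varepsilon_\delta=2m_\delta/(t-1)$ of $\arg(\omega^jQ)$. Since $t>\csc(\pi/k)$ is a strict inequality, the widened sectors of half-angle $\arcsin(1/t)+\varepsilon_\delta$ still meet only at $0$ for small $\delta$. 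Each small piece of a totally real sheet is polynomially convex, so Kallin's lemma applies directly to the perturbed union.

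Two smaller points. The map $(z,w)\mapsto(z+tw,(z+tw)^k)$ is not proper, since it is constant on the lines $z+tw=c$. And $\Phi^{-1}(\widehat K)\subset\widehat{\Phi^{-1}(K)}$ does not follow from surjectivity alone; it needs the symmetric-function argument over the fibres of $\Phi$, which is Result~\ref{R:Sto_propr}.
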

\begin{remark}
For $k=3$, we obtain that $M_t$ is locally polynomially convex at the origin if $t>cosec\left(\frac{\pi}{3}\right)$ which is a sharper bound than that of Result~\ref{Gorai-polycvx}.
\end{remark}

 A few ideas about our proofs: 
\begin{itemize}
    \item For polynomial convexity we first use a proper holomorphic map $\Phi:\cplx^2\to\cplx^2$ such that $\Phi^{-1}(M_t)$ is the union of $k$ totally real surfaces intersecting at the origin. Therefore the problem reduces to the local polynomial convexity of  this union of $k$ totally real surfaces at the origin.
    
    \item There are very few results about the local polynomial convexity of the union of finitely many totally real planes; \cite{MR0960837} for union of two planes,  \cite{MR3123672,MR3206683,gorai2025certainrealsurfacesmathbbc2}. Such result two or three totally real surfaces are there \cite{SG2011,gorai2025certainrealsurfacesmathbbc2}. Hence, there is no such result that we can readily use in our case. By constructing a polynomial that maps each of these surfaces into a distinct sector with vertex at the origin in plane and by using Kallin's lemma, we address this difficulty. Special formulation of our planes plays a very crucial role here. 

    \item To prove theorems about polynomial hull we use Wiegerinck's results \cite{Wiegerinck}.
\end{itemize}

\begin{remark}
    A Bishop type dichotomy for nonparabolic case, i.e., except $t=1$ case seems to be possible. Just like Bishop's normal form here also we achieve a one parameter family, upto biholomorphism. Therefore, we argue that this is a right class of family with degenerated CR singularity having a trichotomy: $0<t<1$ corresponds to elliptic, $t=1$ corresponds to parabolic and $t>1$ corresponds to hyperbolic CR singularities 
    for every order of degeneracy.
\end{remark}
\noindent Finally, we pose a conjecture in the lines of \cite{gorai2025certainrealsurfacesmathbbc2}. 
\begin{conjecture}
The surface $M_t$ is locally polynomially convex if $t>1$. The local polynomial hull of $M_t$ contains an analytic disc for $t<1$. 
\end{conjecture}
\noindent We expect the behavior of local polynomially convex hull similar to parabolic case \cite{MR1488338} for $t=1$.

	\section{Preliminaries}\label{S:technical}
    In this section we collect some results from literature that will be used in our proofs. First we mention a couple of results concerning polynomial convexity; the first one is Kallin's lemma \cite{Kallin65} (see also \cite[Theorem 1.6.19]{Sto07}) and the second one is from Stout's book \cite[Theorem 1.3.9]{Sto07}.

\begin{result}\label{R:Sto_apprx}
Let \(X_1\) and \(X_2\) be compact polynomially convex subsets of \(\mathbb{C}^n\). Suppose that \(p\) is a polynomial such that
$\widehat{p(X_1)}\cap \widehat{p(X_2)} \subset \{0\}$.
If the set
$p^{-1}(0)\cap (X_1\cup X_2)$
is polynomially convex, then \(X_1\cup X_2\) is polynomially convex. 
\end{result}

\begin{result}\label{R:Sto_propr}
Let \(F:\mathbb{C}^n\to\mathbb{C}^n\) be a proper holomorphic map, and let \(X\subset\mathbb{C}^n\) be compact. Then \(X\) is polynomially convex if and only if \(F^{-1}(X)\) is polynomially convex.
\end{result}

Next, we give a very brief survey of Maslov-type index keeping focus on our requirement (for more detailed discussion see\cite{MR0894560}). Let \(\gamma:S^1\to M\) be a closed curve in a \(C^1\)-smooth \(n\)-dimensional real submanifold \(M\subset \mathbb C^n\). Assume that the pullback bundle \(\gamma^{*}TM\) is trivial. Then there exist continuous sections
\[
X_1,X_2,\ldots,X_n:S^1\to TM
\]
such that $\{X_1(\zeta),X_2(\zeta),\ldots,X_n(\zeta)\}$ forms a basis of \(T_{\gamma(\zeta)}M\) for every \(\zeta\in S^1\).
Define $d:S^1\to \mathbb C\setminus\{0\}$
by
\[
d(\zeta)=\det\bigl(X_1(\zeta),X_2(\zeta),\ldots,X_n(\zeta)\bigr).
\]
The winding number of \(d\) about the origin is independent of the choice of the sections \(X_1,\ldots,X_n\). This winding number is called the \emph{Maslov-type index} of \(\gamma\). In particular, let \(\gamma:S^1\to M\) be a closed curve in a \(C^1\)-smooth orientable totally real surface \(M\subset\mathbb C^2\). Since \(M\) is orientable, the bundle \(\gamma^{*}TM\) is trivial. Let \(X_1\) and \(X_2\) be continuous sections of \(\gamma^{*}TM\) forming a basis of \(T_{\gamma(\zeta)}M\) for every \(\zeta\in S^1\). The Maslov-type index of \(\gamma\) is then defined as the winding number of the map
\[
d:S^1\to\mathbb C\setminus\{0\},
\qquad
d(\zeta)=\det\bigl(X_1(\zeta),X_2(\zeta)\bigr).
\]
We denote it by $\operatorname{Ind}_{\mathcal M}(M,\gamma).$
An important property of \(\operatorname{Ind}_{\mathcal M}(M,\gamma)\) is that it depends only on the homology class of \(\gamma\) in \(M\). In particular, if \(\gamma_1\) and \(\gamma_2\) are homologous closed curves in \(M\), then
$
\operatorname{Ind}_{\mathcal M}(M,\gamma_1)
=
\operatorname{Ind}_{\mathcal M}(M,\gamma_2).
$
We put the definition of Maslov-type index of an isolated CR singularity as in \cite{Bharali2012} (see also \cite{gorai2025certainrealsurfacesmathbbc2}).

\begin{definition}
Let \(M\subset\mathbb C^2\) be a \(C^1\)-smooth orientable totally real surface with an isolated CR singularity at a point \(p\in M\). Let \(U_p\) be a contractible neighborhood of \(p\) such that $(U_p\cap M)\setminus\{p\}$ is totally real. The \emph{Maslov-type index} of the CR singularity \(p\), denoted by $\operatorname{Ind}_{\mathcal M}(M,p),$ is defined by
\[
\operatorname{Ind}_{\mathcal M}(M,p)
=
\operatorname{Ind}_{\mathcal M}(M,\gamma),
\]
where $\gamma:S^1\to (U_p\cap M)\setminus\{p\}$ is any simple closed curve winding once around \(p\).
\end{definition}
\noindent
As shown in \cite{gorai2025certainrealsurfacesmathbbc2}, the following result of Bharali~\cite{Bharali2012} plays a crucial role in computing the Maslov-type index in our setting.
\begin{result}\label{R:Bharali_MslovIndex} 
Let $p$ be a homogeneous polynomial of degree $k$ in $z$ and $\bar{z}$ such that $\{ z \in \mathbb{C} : \frac{\partial p}{\partial \bar{z}}(z, \bar{z}) = 0 \} = \{0\}.$ Define a polynomial $q$ in $z$ by $q(z) = \left. \frac{\partial p}{\partial \bar{z}}(z, \bar{z}) \right|_{\bar{z} = 1}.$ Then the Maslov-type index of the graph $\Gamma_p$ of $p$ at the origin is given by  
\[
\operatorname{Ind}_{\mathcal{M}}(\Gamma_p,0)= 2 \sum_{\zeta \in q^{-1}\{0\} \cap \mathbb{D}} \mu(\zeta) - (k - 1),
\]  
where $\mu(\zeta)$ denotes the multiplicity of $\zeta$ as a zero of the polynomial $q$.
\end{result}

Next, we recall several results of Wiegerinck~\cite{Wiegerinck} concerning the local polynomial hulls of certain graphs which will be useful in this paper.

\begin{result}[Wiegerinck, {\cite[Theorem 3.1]{Wiegerinck}}]\label{th1}
    Let $\varphi$ be a $C^k$-smooth, $k \geq 2$, function on a disc in $\mathbb{C}$ centered at the origin. Suppose that the graph of $\varphi$, denoted by $S_\varphi$, has an isolated CR singularity at the origin of Maslov-type index $j$, $0 < j < k$. If $\operatorname{Re}\left(\frac{\varphi(z, \bar{z})}{z^{j-1}}\right)$ is strictly subharmonic on a punctured neighborhood of the origin, then there exist analytic discs with boundary in $S_\varphi$.  
\end{result}

\begin{result}[Wiegerinck, {\cite[Theorem 3.3]{Wiegerinck}}]\label{llll}
    Let $F(z, \bar{z})$ be a homogeneous function of degree $k$ in $z$ and $\bar{z}$, and $\mathcal{C}^2$-smooth away from the origin in $\mathbb{C}$. Suppose that the origin is an isolated CR singularity of $S = \{w = F(z, \bar{z})\}$ and the Maslov-type index at $0 \in \mathbb{C}^2$ is $j$, $0 < j < k$. Assume that $\operatorname{Re}\left(\frac{F(z, \bar{z})}{z^{j-1}}\right)$ is a subharmonic but nowhere harmonic function on $\mathbb{C} \setminus \{0\}$. Then:
    \begin{enumerate}[(i)]
        \item $S$ is not locally polynomially convex at $0 \in \mathbb{C}^2$.
        \item For every $r > 0$, the polynomial hull of $S \cap \overline{B(0; r)}$ contains a $(2j - 1)$-parameter family of analytic discs with boundary in $S$ passing around zero if and only if the curve $\mathcal{C}: S^1 \to \mathbb{C}$ defined by
        \[
        \mathcal{C}(z) = \frac{F(z, \bar{z})}{z^k}
        \]
        has the following property:  
        \begin{itemize}
            \item[($\ast\ast$)] If, for two different points $z_1 \neq z_2$ on the unit circle, $\mathcal{C}(z_1) = \mathcal{C}(z_2)$, then $z_1$ and $z_2$ divide the unit circle into two segments of length at least $\frac{\pi}{k - j + 1}$. Moreover, if the Maslov-type index $j > 1$, this family will fill an open neighborhood of the origin in $\mathbb{C}^2$.
        \end{itemize}
        \item If Property $(\ast\ast)$ is not satisfied by the curve $\mathcal{C}$, then for every $r > 0$, the polynomial hull of $S \cap \overline{B(0; r)}$ contains at least a one-parameter family of analytic discs with boundary in $S$ passing through the origin.
    \end{enumerate}
\end{result}

\begin{result}[Wiegerinck, {\cite[Corollary 3.4]{Wiegerinck}}]\label{coro1}
		With the notation of Result~\ref{llll} but assume only that $Re\left(\frac{F(z,\bar{z})}{z^{j-1}}\right)$ is strictly subharmonic on an open sector containing points $z_1,z_2$  on the unit circle and a separating segment of length less than $\frac{\pi}{k-j+1}$. If the curve $\mathcal{C}$ satisfies $\mathcal{C}(z_1)=\mathcal{C}(z_2)$, then there exists a family of one-parameter analytic discs with boundary in $S$ passing through $0$. 
\end{result}

\begin{result}[Wiegerinck, {\cite[Theorem 3.5]{Wiegerinck}}]\label{p}
    Let $f(z, \bar{z}) = p_k(z, \bar{z}) + o(|z|^k)$ be a smooth function of class $\mathcal{C}^k$ on a disc $D(0; r)$, where $p_k$ is a homogeneous polynomial of degree $k$ in $z$ and $\bar{z}$. Suppose that the origin is an isolated CR singularity of $S = \{(z, w) \in \mathbb{C}^2 : w = p_k(z, \bar{z})\}$ and that the Maslov-type index at $0 \in \mathbb{C}^2$ is $j$, $0 < j < k$. Assume that $\operatorname{Re}\left(\frac{p_k(z, \bar{z})}{z^{j-1}}\right)$ is subharmonic but nowhere harmonic on $\mathbb{C} \setminus \{0\}$ and the curve $\mathcal{C}: S^1 \to \mathbb{C}$ defined by  
    \[
    \mathcal{C}(z) = \frac{p_k(z, \bar{z})}{z^k}
    \]
    has the Property $(**)$ from Result~\ref{llll}. Then, for every $\delta > 0$, the polynomial hull of $\Gamma_f \cap \overline{B(0; \delta)}$ contains a $(2j - 1)$-parameter family of analytic discs with boundary in $\Gamma_f$, whose union contains an open ball centered at the origin in $\mathbb{C}^2$ if $j > 1$.
\end{result}
The following result, due to Harris~\cite{MR773068}, provides a biholomorphic invariant normal form for a real surface in \(\mathbb{C}^2\) near an isolated CR singularity. We will  use this crucially.
\begin{result}\label{g}
Let $M$ be a $\smoo^k$-smooth real surface in $\mathbb{C}^2$ with an isolated CR-singularity at the origin of order $k.$ Then locally near the origin up to biholomorphism of $\mathbb{C}^2$, $M$ is of the form 
\begin{align*}
    \{(z,p_k(z,\bar{z})+o(|z|^k)\}
\end{align*}
  where $p_k$ is one of the following degree $k$ homogeneous polynomials
  \begin{enumerate}[(i)]
      \item $z^{k-\alpha_{0}}\bar{z}^{\alpha_{0}}$ for $k,\alpha_{0}\in \mathbb{Z},k\ge 2$ and $1\le \alpha_{0}\le k,$
      \item $z^{k-\alpha_{0}}\bar{z}^{\alpha_{0}}+\gamma z^{k-\alpha_{1}}\bar{z}^{\alpha_{1}}$ for $k,\alpha_{0},\alpha_1\in \mathbb{Z},k\ge 2,$ $1\le \alpha_{0}<\alpha_1\le k,$ and $\gamma>0$
      \item $z^{k-\alpha_{0}}\bar{z}^{\alpha_{0}}+\gamma z^{k-\alpha_{1}}\bar{z}^{\alpha_{1}}+\sum^{J}_{j=2}a_{k-\alpha_{j},\alpha_{j}}z^{k-\alpha_{j}}\bar{z}^{\alpha_{j}}$ for $k,J,\alpha_{0},\alpha_1,\cdots,\alpha_{J}\in \mathbb{Z},k\ge 2,$ $2\le J\le k-1$, $1\le \alpha_{0}<\alpha_1<\cdots<\alpha_{J}\le k,$ a real number $\gamma > 0$, and nonzero complex numbers $a_{k-\alpha_j,\alpha_j}$ $(2 \leq j \leq J)$ satisfying $0 < \text{Arg}(a_{k-\alpha_j,\alpha_j}) < \frac{2\pi}{k_j}$, where the integers $k_j$ are defined as follows: For each $j = 2, 3, \ldots, J$ let $\rho_j$ be the integer defined by
\begin{align*}
   0 < \rho_j < \alpha_1 - \alpha_0 \text{ and } (\alpha_j - \alpha_0) = m_j (\alpha_1 - \alpha_0) + \rho_j  
\end{align*}
for some integer $m_j$. Let $l_1 = a_1 - \alpha_0$. For given $l_1, \ldots, l_j$ let $l_{j+1} = \text{gcd}(l_j, \rho_{j+1})$ or $l_{j+1} = l_j$ if $\rho_{j+1} = 0$. For each $j = 2, \ldots, r$ let $k_j = \frac{l_{j-1}}{l_j}$.
\end{enumerate}
\end{result}

\noindent Just to make the reader comfortable we make the following remark, essentially from \cite{MR773068}.
\begin{remark}
For $k=4$, the possible forms of the polynomial $p_k$ are as follows:
\begin{enumerate}
    \item[(a)] $z^3\bar z$, $z^2\bar z^2$, $z\bar z^3$, $\bar z^4$.
    
    \item[(b)] $z^3\bar z+\gamma z^2\bar z^2$, $z^3\bar z+\gamma z\bar z^3$, $z^3\bar z+\gamma\bar z^4$, $z^2\bar z^2+\gamma\bar z^4$, $z^2\bar z^2+\gamma z\bar z^3$, $z\bar z^3+\gamma\bar z^4$.
    
    \item[(c)] $z^3\bar z+\gamma z^2\bar z^2+a_1z\bar z^3+a_2\bar z^4$, $z^3\bar z+\gamma z^2\bar z^2+a_3\bar z^4$, $z^3\bar z+\gamma z\bar z^3+a_4\bar z^4$, $z^2\bar z^2+\gamma z\bar z^3+a_5\bar z^4$.
\end{enumerate}
Similarly, the possible forms of $p_k$ can be determined for all $k\ge5$.
\end{remark}

\section{Normal forms}
In this section, we first prove Theorem~\ref{coeff relation}, which is the key step in obtaining the desired normal form.
\begin{proof}[Proof of Theorem~\ref{coeff relation}]
	Observe that
	\[\begin{aligned}
		\Phi^{-1}(S)
		&= \{(z,w)\in \mathbb{C}^2 : \Phi(z,w)\in S\} \\
		&= \{(z,w)\in \mathbb{C}^2 : p_k(z,w)=p_k(z,\bar z)\}.
	\end{aligned}
	\]
Factoring the left-hand side of $p_k(z,w)-p_k(z,\bar z)=0$, we obtain
$(w-\bar z)\, g(w,z,\bar z)=0,$ where
	\[
	\begin{aligned}
		g(w,z,\bar z)
		={}&
		a_k\sum_{m=0}^{k-1} w^m \bar z^{\,k-m-1}
		+a_{k-1}\left(\sum_{m=0}^{k-2} w^m \bar z^{\,k-m-2}\right)z \\
		&\quad + \cdots
		+a_3(w^2+w\bar z+\bar z^2)z^{k-3}
		+a_2(w+\bar z)z^{k-2}
		+a_1 z^{k-1}.
	\end{aligned}
	\]
	
	The component $\{w=\bar z\}$ is always a totally real plane. We now determine the remaining factors of \(g\).
	Assume that there exist complex numbers $\alpha_1,\dots,\alpha_{k-1},\beta_1,\dots,\beta_{k-1},$
	such that
	\begin{equation}\label{E:Others_roots_g_polished}
		g(w,z,\bar z)
		=
		a_k\prod_{j=1}^{k-1}
		\bigl(w-\alpha_j z-\beta_j\bar z\bigr).
	\end{equation}
	
	Comparing coefficients in \eqref{E:Others_roots_g_polished}, we obtain a collection of relations among the parameters \(\alpha_j\), \(\beta_j\), and the coefficients of \(p_k\). For distinct indices \(j_1,j_2,\ldots,j_{k-1}\in\{1,2,\ldots,k-1\}\), comparison of coefficients in \eqref{E:Others_roots_g_polished} yields the following equations.
	
	First, comparing the coefficients corresponding to the terms involving one \(\alpha\)-factor yields
	\begin{equation}\label{eq:p_polished}
		\left\{ 	\begin{aligned}
			\sum_{\substack{1\le j_1\le k-1}}\alpha_{j_1}&=-\frac{a_{k-1}}{a_k},\\
			\sum_{\substack{1\le j_1,j_2\le k-1}}\alpha_{j_1}\beta_{j_2}&=\frac{a_{k-1}}{a_k},\\
            \sum_{\substack{1\le j_1\le k-1\\1\le j_2<j_3\le k-1}}\alpha_{j_1}\beta_{j_2}\beta_{j_3}&=-\frac{a_{k-1}}{a_k} ,\\
			\vdots\\
			\sum_{\substack{1\le j_1\le k-1\\1\le j_2<...<j_{k-1}\le k-1}}\alpha_{j_1}\beta_{j_2}...\beta_{j_{k-1}}&=(-1)^{k-1}\frac{a_{k-1}}{a_k}.
		\end{aligned}\right.
	\end{equation}
	Similarly, comparing coefficients involving two \(\alpha\)-factors gives
	\begin{equation}\label{eq:q_polished}
		\left\{	\begin{aligned}
			\sum_{\substack{1\le j_1<j_2\le k-1}}\alpha_{j_1}\alpha_{j_2}&=\frac{a_{k-2}}{a_k},\\
			\sum_{\substack{1\le j_1<j_2\le k-1\\1\le j_3\le k-1}}\alpha_{j_1}\alpha_{j_2}\beta_{j_3}&=-\frac{a_{k-2}}{a_k}, \\
			\sum_{\substack{1\le j_1<j_2\le k-1\\1\le j_3<j_4\le k-1}}\alpha_{j_1}\alpha_{j_2}\beta_{j_3}\beta_{j_4}&=\frac{a_{k-2}}{a_k}, \\
			\vdots\\
			\sum_{\substack{1\le j_1<j_2\le k-1\\1\le j_3<...<j_{k-1}\le k-1}}\alpha_{j_1}\alpha_{j_2}\beta_{j_3}...\beta_{j_{k-1}}&=(-1)^{k-1}\frac{a_{k-2}}{a_k}.
		\end{aligned}  \right.
	\end{equation}
	
	Proceeding inductively, we obtain
	\begin{equation}\label{eq:s_polished}
		\left\{	\begin{aligned}
			\sum_{\substack{1\le j_1<...<j_{k-2}\le k-1}}\alpha_{j_1}\alpha_{j_2}...\alpha_{j_{k-2}}&=(-1)^{k-2}\frac{a_{2}}{a_k}, \\
			\sum_{\substack{1\le j_1<...<j_{k-2}\le k-1\\1\le j_{k-1}\le k-1}}\alpha_{j_1}\alpha_{j_2}...\alpha_{j_{k-2}}\beta_{j_{k-1}}&=(-1)^{k-1}\frac{a_{2}}{a_k}.
		\end{aligned}  \right.
	\end{equation}
	
	Finally,
	\begin{equation}\label{eq:t_polished}
		\alpha_1\alpha_2\cdots\alpha_{k-1}
		=
		(-1)^{k-1}\frac{a_1}{a_k}.
	\end{equation}
	
	Comparing the coefficients involving only the \(\beta_j\)'s yields
	\begin{equation}\label{eq:a_polished}
		\left\{ 	\begin{aligned}
			\sum_{\substack{1\le j_1\le k-1}}\beta_{j_1}&=-1,\\
			\sum_{\substack{1\le j_1<j_2\le k-1}}\beta_{j_1}\beta_{j_2}&=1,\\
			\vdots\\
			\sum_{\substack{1\le j_1<...<j_{k-1}\le k-1}}\beta_{j_1}\beta_{j_2}...\beta_{j_{k-1}}&=(-1)^{k-1}.
		\end{aligned} \right.
	\end{equation}
	
	It follows from \eqref{eq:a_polished} that $\beta_1,\dots,\beta_{k-1}$ are precisely the roots of $x^{k-1}+x^{k-2}+\cdots+x+1=0.$
	Let $r=e^{2\pi i/k};$ then $\beta_j=r^j$ for $j=1,\dots,k-1.$
	Using the relations between the roots and coefficients of the polynomial
	\[
	x^{k-1}+x^{k-2}+\cdots+x+1,
	\]
	the identities in \eqref{eq:p_polished} reduce to
	\begin{equation}\label{eq:u_polished}
		\left\{
		\begin{aligned}
			\sum_{j=1}^{k-1}\alpha_j
			&=-\frac{a_{k-1}}{a_k},\\
			\sum_{j=1}^{k-1}\alpha_j(1+\beta_j)
			&=-\frac{a_{k-1}}{a_k},\\
			\sum_{j=1}^{k-1}\alpha_j(1+\beta_j+\beta_j^2)
			&=-\frac{a_{k-1}}{a_k},\\
			&\vdots\\
			\sum_{j=1}^{k-1}\alpha_j
			\left(1+\beta_j+\cdots+\beta_j^{k-2}\right)
			&=-\frac{a_{k-1}}{a_k}.
		\end{aligned}
		\right.
	\end{equation}
	
	Subtracting consecutive equations, we obtain
	\[
	\sum_{j=1}^{k-1}\alpha_j\beta_j^m=0,
	\qquad
	m=1,\dots,k-2.
	\]
	Hence,
	\[
	\begin{pmatrix}
		1 & 1 & \cdots & 1 \\
		\beta_1 & \beta_2 & \cdots & \beta_{k-1} \\
		\beta_1^2 & \beta_2^2 & \cdots & \beta_{k-1}^2 \\
		\vdots & \vdots & \ddots & \vdots \\
		\beta_1^{k-2} & \beta_2^{k-2} & \cdots & \beta_{k-1}^{k-2}
	\end{pmatrix}
	\begin{pmatrix}
		\alpha_1\\
		\alpha_2\\
		\vdots\\
		\alpha_{k-1}
	\end{pmatrix}
	=
	\begin{pmatrix}
		-\dfrac{a_{k-1}}{a_k}\\
		0\\
		\vdots\\
		0
	\end{pmatrix}.
	\]
	
	Denoting this system by \(AX=b\), we observe that \(A\) is a Vandermonde matrix. Since the numbers
	$\beta_1,\dots,\beta_{k-1}$
	are distinct, the matrix \(A\) is invertible.
	
	Let $A^{-1}=(b_{ij})_{i,j=1}^{k-1}.$ Then entries of the inverse Vandermonde matrix are given by
	\[
	b_{ij}
	=
	(-1)^{k-j-1}
	\frac{
		e_{k-j-1}
		(\beta_1,\dots,\widehat{\beta_i},\dots,\beta_{k-1})
	}{
		\prod_{\substack{m=1\\m\neq i}}^{k-1}
		(\beta_i-\beta_m)
	},
	\]
	where
	$e_m(x_1,\dots,x_\ell)
	=
	\sum_{1\leq j_1<\cdots<j_m\leq \ell}
	x_{j_1}\cdots x_{j_m}$
	denotes the \(m\)-th elementary symmetric polynomial.
	To determine \(\alpha_j\), it suffices to compute $	b_{11},b_{21},\dots,b_{(k-1)1}.$ For \(j=1,\dots,k-1\), we compute
	\begin{align}
		b_{j1}
		&=
		(-1)^{k-2}
		\frac{
			e_{k-2}(\beta_1,\dots,\widehat{\beta_j},\dots,\beta_{k-1})
		}{
			\prod_{\substack{m=1\\m\neq j}}^{k-1}
			(\beta_j-\beta_m)
		}
		\nonumber\\
		&=
		(-1)^{k-2}
		\prod_{\substack{m=1\\m\neq j}}^{k-1}
		\frac{\beta_m}{\beta_j-\beta_m}
		\nonumber\\
		&=
		(-1)^{k-2}
		\prod_{\substack{m=1\\m\neq j}}^{k-1}
		\frac{r^m}{r^j-r^m}
		\nonumber\\
		&=
		(-1)^{k-2}
		\frac{r^j-1}{
			(r-1)(r^2-1)\cdots(r^{k-1}-1)
		}.
		\label{eq:bj1_polished}
	\end{align}
	
	Since $1,r,r^2,\dots,r^{k-1}$ are the roots of \(x^k-1=0\), it follows that
	$0,r-1,r^2-1,\dots,r^{k-1}-1$ are the roots of $(x+1)^k-1.$
	Therefore, $(r-1)(r^2-1)\cdots(r^{k-1}-1)=(-1)^{k-1}k.$ Substituting this into \eqref{eq:bj1_polished}, we obtain $b_{j1}=\frac{1-r^j}{k}.$
	Consequently,
	\[
	\alpha_j
	=
	-\frac{a_{k-1}}{a_k}b_{j1}
	=
	\frac{a_{k-1}}{a_k}\frac{r^j-1}{k},
	\qquad
	j=1,\dots,k-1.
	\]
	
	Substituting these expressions into
	\eqref{eq:p_polished}--\eqref{eq:t_polished}, we obtain
	\[
	(-1)^m\frac{a_{k-m}}{a_k}
	=
	\sum_{1\leq j_1<\cdots<j_m\leq k-1}
	\alpha_{j_1}\cdots\alpha_{j_m}.
	\]
	Using the elementary symmetric functions of the \(k\)-th roots of unity, this simplifies to
	\[
	(-1)^m\frac{a_{k-m}}{a_k}
	=
	\left(\frac{a_{k-1}}{a_k}\right)^m
	(-1)^m
	\frac{\binom{k}{m}}{k^m}.
	\]
	
	Hence,
	\[
	\frac{a_{k-m}}{a_k}
	=
	\left(\frac{a_{k-1}}{a_k}\right)^m
	\frac{\binom{k}{m}}{k^m},
	\qquad
	m=0,1,\dots,k-1.
	\]

Conversely, suppose that $a_k\neq 0$ and
$\frac{a_{k-m}}{a_k}
=
\left(\frac{a_{k-1}}{a_k}\right)^m
\frac{\binom{k}{m}}{k^m}$
for $m=0,1,\ldots,k-1$.
Then
\begin{align*}
p_k(z,w)
&=
a_1z^{k-1}w+a_2z^{k-2}w^2+\cdots+a_{k-1}zw^{k-1}+a_kw^k\\
&=
a_k\sum_{m=0}^{k-1}
\frac{a_{k-m}}{a_k}z^mw^{k-m}\\
&=
a_k\sum_{m=0}^{k-1}
\binom{k}{m}
\left(\frac{a_{k-1}}{ka_k}z\right)^m
w^{k-m}.
\end{align*}

Now, $p_k(z,w)=p_k(z,\bar z)$ gives,
\[
\sum_{m=0}^{k}
\binom{k}{m}
\left(\frac{a_{k-1}}{ka_k}z\right)^m
w^{k-m}
=
\sum_{m=0}^{k}
\binom{k}{m}
\left(\frac{a_{k-1}}{ka_k}z\right)^m
\bar z^{\,k-m},
\]
which implies
\[
\left(
w+\frac{a_{k-1}}{ka_k}z
\right)^k
=
\left(
\bar z+\frac{a_{k-1}}{ka_k}z
\right)^k.
\]

Therefore
\[
w+\frac{a_{k-1}}{ka_k}z
=
r^j
\left(
\bar z+\frac{a_{k-1}}{ka_k}z
\right),
\qquad
j=0,1,\ldots,k-1,
\]
where $r=e^{2\pi i/k}$. Consequently, $w=(r^j-1)\frac{a_{k-1}}{ka_k}z+r^j\bar z$ for $j=0,1,\ldots,k-1.$

Hence $\Phi^{-1}(S)
=
\{(z,w):p_k(z,w)=p_k(z,\bar z)\}
=
\bigcup_{j=0}^{k-1}X_j,$
where 
\[X_j
=
\left\{
(z,w)\in\mathbb C^2:
w=
(r^j-1)\frac{a_{k-1}}{ka_k}z
+r^j\bar z
\right\}\]
is a totally real plane for each $j=0,1,\ldots,k-1.$ This proves the required necessary and sufficient conditions.

Also, if $\left|\frac{a_{k-1}}{ka_k}\right|\neq 1$, then the planes $X_j$ are pairwise transverse. Conversely, suppose that
$\left\{(z,w)\in\mathbb{C}^2:w=\alpha_jz+\beta_j\bar z\right\}$ and
$\left\{(z,w)\in\mathbb{C}^2:w=\alpha_lz+\beta_l\bar z\right\}$
intersect transversally at the origin. Since both are $2$-dimensional real subspaces of $\C^2$, transversality at the origin is equivalent to their intersection being precisely the origin. Thus, the equation
$\alpha_jz+\beta_j\bar z=\alpha_lz+\beta_l\bar z$
has the unique solution $z=0$. Equivalently,
$\left|\frac{\alpha_j-\alpha_l}{\beta_j-\beta_l}\right|\neq 1$.
Substituting the expressions for $\alpha_j$, $\alpha_l$, $\beta_j$, and $\beta_l$ yields
$\left|\frac{a_{k-1}}{ka_k}\right|\neq 1$.
\end{proof}
The following proposition shows that the proper holomorphic map in condition $(\ast)$ is unique up to invertible $\C$-linear transformations.
\begin{proposition}
     Let $S=\{(z,w)\in \mathbb{C}^2 : w=p_k(z,\bar z)\},$ where $p_k(z,w)=\sum_{m=1}^{k}a_m\, z^{\,k-m}w^{\,m}$ with $a_k\neq 0$ and $\frac{a_{k-m}}{a_k}=\frac{\binom{k}{m}}{k^m}\left(\frac{a_{k-1}}{a_k}\right)^m,\text{for all } m=2,3,\dots,k-1.$ Let \(\Psi:\mathbb C^2\to\mathbb C^2\) be a holomorphic map of the form $\Psi(z,w)=(z,Q(z,w)),$ where \(Q\) is a homogeneous polynomial. Assume that
	\(\Psi^{-1}(S)\) contains a totally real plane. Then \(\Psi=\Phi\) as defined in Theorem~\ref{coeff relation}, up to an invertible complex linear
	transformation.
\end{proposition}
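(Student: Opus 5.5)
Set $c=\frac{a_{k-1}}{ka_k}$. By Theorem~\ref{coeff relation} we can write
\[
p_k(z,w)=a_k\bigl((w+cz)^k-(cz)^k\bigr),
\]
since the coefficient condition turns $p_k$ into a binomial expansion with the pure $z^k$ term missing. Write $Q$ as a homogeneous polynomial of degree $d$. The hypothesis is that $\Psi^{-1}(S)=\{(z,w): Q(z,w)=p_k(z,\bar z)\}$ contains a totally real plane. Such a plane is a real $2$-plane through the origin, and totally real means it is a graph over the $z$-line, say $P=\{w=\alpha z+\beta\bar z\}$ with $\beta\neq 0$. (A plane $\{z=0\}$ is complex; otherwise the plane projects isomorphically onto the $z$-line.) So the hypothesis becomes the identity
\[
Q(z,\alpha z+\beta\bar z)=p_k(z,\bar z)\qquad\text{for all } z\in\mathbb C.
\]
The plan is to extract $Q$ from this identity.

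First compare homogeneity. The right-hand side is a nonzero real-homogeneous polynomial of degree $k$ in $(z,\bar z)$, so $d=k$. Next, the substitution $(z,\bar z)\mapsto(z,\alpha z+\beta\bar z)$ is an invertible complex-linear change of the independent variables $(z,\bar z)$, because $\beta\ne0$. Treating $z$ and $\bar z$ as independent variables $(z,u)$, which is legitimate since polynomial identities in $z,\bar z$ hold formally, we get
\[
Q(z,\alpha z+\beta u)=p_k(z,u)
\]
as polynomials. Setting $w=\alpha z+\beta u$, i.e. $u=(w-\alpha z)/\beta$, gives
\[
Q(z,w)=p_k\bigl(z,(w-\alpha z)/\beta\bigr).
\]
Therefore $\Psi=\Phi\circ L$ with $L(z,w)=\bigl(z,(w-\alpha z)/\beta\bigr)$, an invertible complex-linear map. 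This already establishes the statement.

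If one wants the stronger normalization in which $L$ sends the chosen plane $P$ to one of the planes $X_j$ from the proof of Theorem~\ref{coeff relation}, note that $L(P)=\{w=\bar z\}=X_0$. Hence $\Psi^{-1}(S)=L^{-1}(\Phi^{-1}(S))$ is a union of $k$ totally real planes, as for $\Phi$.

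The main obstacle is justifying the passage from an identity in $(z,\bar z)$ to a formal polynomial identity, and confirming that the components of $\Psi^{-1}(S)$ are planes of the claimed form. The first point is standard: a polynomial in $z,\bar z$ vanishing on $\mathbb C$ has all coefficients zero. I will also note where the homogeneity of $Q$ is used, namely to force $d=k$ and to exclude lower-order terms. Without homogeneity, the same argument would still give $Q=p_k\circ L$ directly.
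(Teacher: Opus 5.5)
Your argument is the paper's proof. The paper also writes the plane as $\{w=\alpha_1 z+\beta_1\bar z\}$ with $\beta_1\neq 0$ and uses the same map $L(z,w)=\bigl(z,(w-\alpha_1 z)/\beta_1\bigr)$. It then upgrades $Q(z,\alpha_1 z+\beta_1\bar z)=p_k(z,\bar z)$ to an identity of holomorphic polynomials by the identity principle on the maximally totally real plane $\{w=\bar z\}$, which is your ``a polynomial in $z,\bar z$ vanishing on $\mathbb{C}$ has zero coefficients''. The degree comparison and the binomial rewriting of $p_k$ are not needed.

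\textbf{The one step you add beyond the paper is false.} A totally real plane other than $\{z=0\}$ need not project isomorphically onto the $z$-line. Any real $2$-plane meeting $\{z=0\}$ in exactly a real line is totally real, for example $\mathbb{R}^2=\{(x,y):x,y\in\mathbb{R}\}$.

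This case really occurs. Take $Q(z,w)=p_k(z,z)$, a homogeneous polynomial in $z$ alone. Then $\Psi^{-1}(S)\supset\mathbb{R}\times\mathbb{C}\supset\mathbb{R}^2$. But $\Psi^{-1}(0,0)\supset\{0\}\times\mathbb{C}$, so $\Psi$ is not proper and cannot equal $\Phi\circ L$, which is proper since $a_k\neq 0$.

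So the reduction to graph form needs an extra hypothesis; the natural one is properness of $\Psi$, as in condition $(\ast)$. The argument is short:
\begin{enumerate}
\item Suppose $P\cap\{z=0\}\supset\mathbb{R}(0,v)$ with $v\neq 0$. Then $Q(0,sv)=p_k(0,0)=0$ for all real $s$.
\item Hence $Q(0,\cdot)\equiv 0$, so $\Psi^{-1}(0,0)\supset\{0\}\times\mathbb{C}$, contradicting properness.
\item Therefore $P\cap\{z=0\}=\{0\}$, so $P=\{w=\alpha z+\beta\bar z\}$, and total reality forces $\beta\neq 0$.
\end{enumerate}

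The paper's proof makes the same tacit assumption by starting from a plane already in graph form. With properness (or the graph form) assumed explicitly, your proof is complete and coincides with the paper's.
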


\begin{proof}
	Let $\Gamma_1=\{(z,w)\in\C^2:w=\alpha_1z+\beta_1\bar z\}\subset\Psi^{-1}(S)$
	where $\beta_1\ne 0.$ Consider the invertible complex linear map $L:\C^2\longrightarrow\C^2$ by $L(z,w)=\left(z,\frac{w-\alpha_1z}{\beta_1}\right).$
	Then
	\[
	L(\Gamma_1)=\{(z,w)\in\C^2:w=\bar z\}.
	\]
	
	  Let $\widetilde{\Psi}:=\Psi\circ L^{-1}.$ Since $\Gamma_1=L^{-1}\bigl(\{(z,w)\in\C^2:w=\bar z\}\bigr),$ we obtain
	\[
	\widetilde{\Psi}\bigl(\{(z,w)\in\C^2:w=\bar z\}\bigr)
	=\Psi(\Gamma_1)\subset S.
	\]
	Hence,
	\[
	Q(z,\alpha_1z+\beta_1\bar z)=p_k(z,\bar z),
	\qquad z\in\C.
	\]
	
	Now define $F(z,w):=Q(z,\alpha_1z+\beta_1w)-p_k(z,w).$ Then \(F\) is a holomorphic polynomial on \(\C^2\) satisfying $F(z,\bar z)=0$ for all $z\in\C.$
	Since $\{(z,w)\in\C^2:w=\bar z\}$ is a maximally totally real subspace of \(\C^2\), the identity principle implies that \(F\equiv0\). Consequently,
	\[
	Q(z,\alpha_1z+\beta_1w)=p_k(z,w),
	\qquad (z,w)\in\C^2.
	\]
	Replacing \(w\) by \((w-\alpha_1z)/\beta_1\), we obtain $Q(z,w)=p_k\left(z,\frac{w-\alpha_1z}{\beta_1}\right).$ Therefore,
	\[
	\Psi=\Phi\circ L,
	\]
	where \(\Phi(z,w)=(z,p_k(z,w))\). Hence \(\Psi\) coincides with \(\Phi\) up to an invertible complex linear transformation.
\end{proof}

We are now in a position to provide a proof of Theorem~\ref{L:NormalFrm_TotRlSurface}.

\begin{proof}[Proof of Theorem~\ref{L:NormalFrm_TotRlSurface}]
We shall use the normal form obtained in Result~\ref{g}. If $a_k\neq 0, a_j=0$ $\forall j=1,2,\cdots,k-1,$ then $p_k(z,\zbar)=a_k\zbar^k$. After a biholomorphic change of coordinates near the origin, \(M\) can be written locally as
\[
M \cap B(0,\delta)
=
\left\{
(z,w)\in \mathbb{C}^2 :
 w =\bar z^k + o(|z|^k)
\right\}.
\]
If not all of the $a_j$ vanish, then the condition $(\ast)$ or equivalently $a_k\neq 0,$
\[
\frac{a_{k-m}}{a_k}
=
\left(\frac{a_{k-1}}{a_k}\right)^m
\frac{\binom{k}{m}}{k^m},
\qquad
m=1,2,\ldots,k-1
\]
holds only if $p_k(z,\zbar)$ is of the form $(iii)$ in Result~\ref{g}.
The above condition can be written in the form,
\[
\frac{a_{k-m+1}}{a_{k-m}}
=
\left(\frac{a_k}{a_{k-1}}\right)
\frac{k\binom{k}{m-1}}{\binom{k}{m}},
\qquad
m=1,2,\ldots,k-1.
\]
Now, put $a_1=1, a_2=\gamma,$ and a straightforward computation  yields
\begin{align*}
a_m
&=
\gamma^{m-1}
\frac{\binom{k}{k-m}}
{\binom{k}{k-2}^{\,m-1}}
\binom{k}{k-1}^{\,m-2} \\
&=
\gamma^{m-1}
\left(
\frac{\binom{k}{k-1}^{\,m-2}}
{\binom{k}{k-2}^{\,m-1}}
\right)
\binom{k}{k-m} \\
&=
\left(\frac{2\gamma}{k-1}\right)^{m-1}
\frac{\binom{k}{m}}{k},
\qquad
m=1,2,\ldots,k.
\end{align*}

Therefore,
\[
p_k(z,\bar z)
=
\sum_{m=1}^{k}
\left(\frac{2\gamma}{k-1}\right)^{m-1}
\frac{\binom{k}{m}}{k}
\, z^{k-m}\bar z^{\,m},
\qquad
\gamma\in(0,\infty).
\]

Now consider the manifold
$M_\gamma
=
\left\{
(z,p_k(z,\bar z)+o(|z|^k))
:\ |z|<\delta
\right\},$
where \(\gamma\) is a biholomorphic invariant.
Define a biholomorphic change of coordinates $\sigma:\mathbb{C}^2\to\mathbb{C}^2$ by
$\sigma=\sigma_1\circ\sigma_2,$
where
\[
\sigma_1(z,w)=(z,z^k+w),
\qquad
\sigma_2(z,w)=\left(z,\frac{2k\gamma}{k-1}w\right).
\]

Then, locally near the origin,
\[
\sigma(M_\gamma)
=
\left\{
\left(
z,
\left(z+\frac{2\gamma}{k-1}\bar z\right)^k
+o(|z|^k)
\right)
\right\}.
\]
Equivalently,
\[
\sigma(M_\gamma)
=
\left\{
(z,(z+t\bar z)^k+o(|z|^k))
\right\},
\]
where $t=\frac{2\gamma}{k-1}$ is a biholomorphic invariant.
\end{proof}

\section{Local polynomial hull}\label{local polynomial hull}

Before proving our main theorems, let us clarify a point that may be confusing to the reader. We know that the polynomial convexity of a compact set \(K\subset\mathbb{C}^n\) is invariant under any biholomorphism \(F:\mathbb{C}^n\to\mathbb{C}^n\). We have
$
S_t=\left\{(z,w)\in \mathbb{C}^2 : w =(z+t\bar z)^k\right\}.
$
Also, from the proof of Theorem~\ref{L:NormalFrm_TotRlSurface}, after a biholomorphic change of coordinates, \(S_t\) becomes
\[\left\{(z,w):w= p_k(z, \bar{z}) = \sum_{m=1}^k t^{m-1} \frac{\binom{k}{m}}{k} z^{k-m} \bar{z}^m\right\}
=:\widetilde{S_t}.\]

Therefore, polynomial convexity remains invariant whether one use
$
p_k(z,\bar z)=(z+t\bar z)^k
$
or
$
p_k(z,\bar z)=\sum_{m=1}^k t^{m-1}\frac{\binom{k}{m}}{k}z^{k-m}\bar z^m.
$
In other words, \(S_t\) is locally polynomially convex at the origin if and only if \(\widetilde{S_t}\) is locally polynomially convex at the origin for a fixed $t>0$.

Hence, from now on, we shall use either of these two expressions for \(p_k(z,\bar z)\), depending on which one simplifies the computations.

\begin{theorem}\label{T:Not_Poly_Convx}
For $k\ge3,$ the surface \(S_t\) is not locally polynomially convex at the origin if $t\in \left(0,\frac{1}{k-2}\right).$
\end{theorem}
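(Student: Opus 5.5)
The plan is to deduce Theorem~\ref{T:Not_Poly_Convx} directly from part (i) of Result~\ref{llll} (Wiegerinck), applied to the homogeneous function $F(z,\bar z)=(z+t\bar z)^k$. Three hypotheses must be checked: the origin is an isolated CR singularity, the Maslov-type index $j$ satisfies $0<j<k$, and $\operatorname{Re}\bigl(F/z^{j-1}\bigr)$ is subharmonic but nowhere harmonic on $\mathbb{C}\setminus\{0\}$.

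\emph{Step 1: CR singularity and Maslov index.} We have $\partial F/\partial\bar z=kt(z+t\bar z)^{k-1}$. This vanishes only where $z=-t\bar z$. Taking moduli gives $|z|=t|z|$, so $z=0$ whenever $t\neq1$; since $t<\frac{1}{k-2}\le 1$, the origin is an isolated CR singularity. Result~\ref{R:Bharali_MslovIndex} applies with $q(z)=kt(z+t)^{k-1}$. Its only zero is $-t\in\mathbb{D}$, with multiplicity $k-1$. Hence
\[
\operatorname{Ind}_{\mathcal M}(S_t,0)=2(k-1)-(k-1)=k-1=:j,
\]
and $0<j<k$ holds.

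\emph{Step 2: the Laplacian.} Put $G=(z+t\bar z)^k/z^{k-2}$. Then $\partial_{\bar z}G=kt(z+t\bar z)^{k-1}z^{2-k}$. Differentiating in $z$ and factoring gives
\[
\partial_z\partial_{\bar z}G=kt\,(z+t\bar z)^{k-2}z^{1-k}\bigl(z-(k-2)t\bar z\bigr).
\]
Write $z=re^{i\theta}$ and $\zeta=e^{-2i\theta}$. The phases cancel exactly, so
\[
\partial_z\partial_{\bar z}G=kt\,r^{0}\,h(\zeta),\qquad h(\zeta)=(1+t\zeta)^{k-2}\bigl(1-(k-2)t\zeta\bigr).
\]
Since $\Delta\operatorname{Re}G=4\operatorname{Re}\partial_z\partial_{\bar z}G$, it suffices to show that $\operatorname{Re}h(\zeta)>0$ for all $|\zeta|=1$. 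This gives strict subharmonicity on $\mathbb{C}\setminus\{0\}$, and in particular nowhere harmonicity.

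\emph{Step 3 (main obstacle): positivity of $\operatorname{Re}h$.} A crude bound that adds the maximal arguments of the two factors fails near $t=\frac{1}{k-2}$. The point is that the two arguments have opposite signs. For $\zeta=e^{i\varphi}$, the factor $1+t\zeta$ has imaginary part $t\sin\varphi$, while $1-(k-2)t\zeta$ has imaginary part $-(k-2)t\sin\varphi$. Both factors have positive real part because $(k-2)t<1$. Let $a=\arg(1+t\zeta)$ and $b=-\arg(1-(k-2)t\zeta)$. Then $a$ and $b$ have the same sign, and $|b|<\pi/2$. We also have $|a|\le\arcsin t<\arcsin\frac{1}{k-2}$. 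Since $x\mapsto\arcsin x/x$ is increasing, $(k-2)\arcsin\frac{1}{k-2}\le\pi/2$, with equality only when $k=3$. Hence $(k-2)|a|<\pi/2$. Because $(k-2)a$ and $b$ lie in $(-\pi/2,\pi/2)$ with the same sign, their difference $\arg h=(k-2)a-b$ lies in $(-\pi/2,\pi/2)$. Also $|h|>0$, so $\operatorname{Re}h>0$.

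\emph{Conclusion.} All hypotheses of Result~\ref{llll} hold with $j=k-1$, and $F$ is smooth away from the origin. Part (i) of that result then gives that $S_t$ is not locally polynomially convex at the origin for $t\in\bigl(0,\frac{1}{k-2}\bigr)$.
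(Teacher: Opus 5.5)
Your proof is correct, and its overall structure matches the paper's. Bharali's formula gives Maslov-type index $k-1$ for $t<1$, and Wiegerinck's criterion is then applied to $\operatorname{Re}\bigl(F/z^{k-2}\bigr)$.

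Two of the differences are cosmetic. The paper works with the equivalent surface $\widetilde{S_t}$, i.e.\ with $\bigl((z+t\bar z)^k-z^k\bigr)/(kt)$, and cites Result~\ref{th1}. You use $(z+t\bar z)^k$ itself and cite Result~\ref{llll}(i). Since $z^k/z^{k-2}=z^2$ is harmonic, the two Laplacians differ only by the factor $kt$. Result~\ref{llll}(i) has the mild advantage of stating non-local polynomial convexity outright.

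The substantive difference is how you prove $\operatorname{Re}h>0$ on the unit circle. The paper expands
\[
h(\zeta)=1-\sum_{m=2}^{k}\frac{m(m-2)}{k}\binom{k}{m}(t\zeta)^{m-1}
\]
and notes that every nonconstant coefficient is $\le 0$. The triangle inequality then gives $\operatorname{Re}h(\zeta)\ge h(1)=\bigl(1-(k-2)t\bigr)(1+t)^{k-2}$. This bound is explicit and attained at $\zeta=1$. So it also shows that $\frac{1}{k-2}$ is exactly the value of $t$ at which the minimum of the Laplacian over $\mathbb{C}\setminus\{0\}$ reaches zero.

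Your argument-splitting proof, using the factorization and the monotonicity of $\arcsin x/x$, is also valid, but it works harder than necessary. The loss you noticed when adding the maximal arguments of the two factors does not occur for the coefficientwise bound.
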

\begin{proof}
First, we compute the Maslov-type index for $\tilde{S_t}$ at the origin in $\mathbb{C}^2$. Since 
\[
p_k(z, \bar{z}) = \sum_{m=1}^k t^{m-1} \frac{\binom{k}{m}}{k} z^{k-m} \bar{z}^m,
\]
we have
\begin{align*}
    \frac{\partial p_k}{\partial \bar{z}} &= \sum_{m=1}^k m t^{m-1} \frac{\binom{k}{m}}{k} z^{k-m} \bar{z}^{m-1} \\
    &= (z + t \bar{z})^{k-1}.
\end{align*}
Note that for $t \neq 1$, $\left(\frac{\partial p_k}{\partial \bar{z}}\right)^{-1}(0) = \{0\}$.

Now, define
\[
    q_k(z) = \frac{\partial p_k}{\partial \bar{z}}(z, 1) = (z + t)^{k-1}.
\]
By Result~\ref{R:Bharali_MslovIndex}, the Maslov-type index of $S_t$ at the origin is:
\begin{align*}
    \text{Ind}_{\mathcal{M}}(\tilde{S_t},0) = 2\left(\sum \{ \mu(\xi) : \xi \in q_k^{-1}\{0\} \cap \mathbb{D} \}\right) - (k - 1),
\end{align*}
where $\mu(\xi)$ denotes the multiplicity of $\xi$ as a zero of the polynomial $q_k$. Therefore,
\begin{align*}
    \text{Ind}_{\mathcal{M}}(\tilde{S_t},0) = (k - 1) \text{ if } t < 1 \text{ and } -(k - 1) \text{ if } t > 1.
\end{align*}

\noindent Now we check subharmonicity of $\phi_k = \text{Re}\left(\frac{p_k(z, \bar{z})}{z^{k-2}}\right)$.
We compute:
\begin{align*}
    \frac{p_k(z, \bar{z})}{z^{k-2}} &= z \bar{z} + \sum_{m=2}^k t^{m-1} \frac{\binom{k}{m}}{k} \frac{\bar{z}^{2m-2}}{|z|^{2m-4}}.
\end{align*}
Therefore,
\begin{align*}
    \phi_k(z, \bar{z}) &= z \bar{z} + \sum_{m=2}^k t^{m-1} \frac{\binom{k}{m}}{2k} \left(\frac{z^m}{\bar{z}^{m-2}} + \frac{\bar{z}^m}{z^{m-2}}\right).
\end{align*}
Hence,
\begin{align*}
    \frac{\partial \phi_k}{\partial \bar{z}} &= z + \sum_{m=2}^k t^{m-1} \frac{\binom{k}{m}}{2k} \left( -(m - 2) \frac{z^m}{\bar{z}^{m-1}} + m \frac{\bar{z}^{m-1}}{z^{m-2}} \right),
\end{align*}
and
\begin{align*}
    \frac{\partial^2 \phi_k}{\partial \bar{z} \partial z} &= 1 - \sum_{m=2}^k t^{m-1} m (m - 2) \frac{\binom{k}{m}}{2k} \left( \frac{z^{m-1}}{\bar{z}^{m-1}} + \frac{\bar{z}^{m-1}}{z^{m-1}} \right).\\
    &\geq 1-\sum_{m=2}^k m (m - 2) \frac{\binom{k}{m}}{k}t^{m-1}\\
    &= (1-(k-2)t)(1+t)^{k-2}.
\end{align*}
Hence,
\begin{align*}
 \frac{\partial^2 \phi_k}{\partial \bar{z} \partial z} > 0 \text{ on } \mathbb{C} \setminus \{0\} \text{ if } t \in \left(0, \frac{1}{k - 2}\right).    
\end{align*}
Therefore, by Result~\ref{th1}, \(\widetilde{S_t}\) is not locally polynomially convex at the origin for
\(t\in\left(0,\frac{1}{k-2}\right)\).
Consequently, \(S_t\) is also not locally polynomially convex at the origin.
\end{proof}

\begin{lemma}\label{L:Roots_g_t}
Let $k\ge3$ and $g_t(z)
=
\sum_{m=1}^{k}
t^{m-1}\frac{\binom{k}{m}}{k}z^{2m}, t\geq 0.$
Then, for each \(a\in \partial \mathbb{D}\), the equation $g_t(z)=g_t(a)$ has exactly two solutions on \(\partial\mathbb{D}\) if and only if $0\leq t<\sin\left(\frac{\pi}{k}\right).$
\end{lemma}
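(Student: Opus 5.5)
The plan is to reduce the equation $g_t(z)=g_t(a)$ to a statement about two circles in the plane. The case $t=0$ is immediate: then $g_0(z)=z^2$, and $z^2=a^2$ on $\partial\mathbb{D}$ has exactly the two solutions $z=\pm a$. For $t>0$ the key observation is the binomial identity
\[
\sum_{m=0}^{k}\binom{k}{m}t^{m}z^{2m}=(1+tz^2)^k,
\qquad\text{so}\qquad
g_t(z)=\frac{(1+tz^2)^k-1}{kt}.
\]
Hence $g_t(z)=g_t(a)$ if and only if $(1+tz^2)^k=(1+ta^2)^k$. This holds if and only if $1+tz^2=\omega(1+ta^2)$ for some $k$-th root of unity $\omega=r^j$, where $r=e^{2\pi i/k}$.

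Put $u=z^2$ and $v=a^2$, which lie on $\partial\mathbb{D}$. As $u$ runs over $\partial\mathbb{D}$, the point $1+tu$ runs over the circle $C_0$ with centre $1$ and radius $t$; more generally $r^jC_0$ is the circle with centre $r^j$ and radius $t$. The choice $\omega=1$ always gives $u=v$, that is, the two solutions $z=\pm a$. So "exactly two solutions for every $a$" is equivalent to the following: for no $j\in\{1,\dots,k-1\}$ and no $Q\in C_0$ is $r^jQ\in C_0$. Any $u\neq v$ coming from $\omega\ne1$ gives two further points $z$, which are distinct from $\pm a$. This condition says exactly that $C_0\cap r^{-j}C_0=\emptyset$ for all $j\ne0$.

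Two circles of the same radius $t$ with distinct centres are disjoint if and only if the distance between their centres exceeds $2t$. They cannot be nested, since they have equal radii. The centre distances here are $|1-r^j|=2\sin(\pi j/k)$, and their minimum over $j$ is $2\sin(\pi/k)$, attained at $j=1$ and $j=k-1$. So if $t<\sin(\pi/k)$, all the rotated circles are disjoint from $C_0$, and we get exactly two solutions for every $a$.

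Conversely, suppose $t\ge\sin(\pi/k)$. Then $C_0$ and $r^{-1}C_0$ meet: they are tangent when $t=\sin(\pi/k)$ and cross in two points when $t>\sin(\pi/k)$. Pick $Q$ in the intersection, so that $Q\in C_0$ and $rQ\in C_0$. Write $Q=1+tv$ and $rQ=1+tu$ with $u,v\in\partial\mathbb{D}$, and take $a$ with $a^2=v$. Then $u\ne v$ provided $Q\ne0$, which yields at least four solutions.

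The one delicate step, and the main obstacle to a clean argument, is ensuring $Q\neq0$. The point $0$ lies on $C_0$ only when $t=1$. In that case the two circles centred at $1$ and $r^{-1}$, both of radius $1$, meet at $0$ and also at $1+r^{-1}$. The latter is nonzero since $k\ge3$, so we choose $Q=1+r^{-1}$. In the tangent case $t=\sin(\pi/k)<1$, the intersection point is automatically nonzero. This completes the equivalence.
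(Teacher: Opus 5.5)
Your proof is correct and follows the paper's route: the binomial identity $g_t(z)=\frac{(1+tz^2)^k-1}{kt}$, then the branches $1+tz^2=r^j(1+ta^2)$ over the $k$-th roots of unity, then deciding when a branch with $j\neq 0$ lands back on $\partial\mathbb{D}$. For that last step the paper computes $|z|^4$ explicitly, while you get the same threshold $t\ge \sin(\pi j/k)$ as the condition for two circles of radius $t$, with centres $2\sin(\pi j/k)$ apart, to meet; you also check that the intersection point $Q=1+ta^2$ can be chosen nonzero, which covers the degenerate case $t=1$, $a^2=-1$ (where every branch collapses to $z=\pm a$) that the paper's converse passes over silently.
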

\begin{proof}
If \(t=0\), then the equation $g_t(z)-g_t(a)=0$ has exactly two solutions, namely \(a\) and \(-a\). Now assume \(t\neq 0\). Observe that \(a\) and \(-a\) are already roots of $g_t(z)-g_t(a).$ Thus, it suffices to determine the range of \(t\) for which there are no additional roots on \(\partial\mathbb{D}\). We first rewrite \(g_t\) as follows:
\[
\begin{aligned}
g_t(z)
&=
\sum_{m=1}^{k}
t^{m-1}\frac{\binom{k}{m}}{k}z^{2m} \\
&=
\frac{1}{kt}
\sum_{m=1}^{k}
\binom{k}{m}(tz^2)^m \\
&=
\frac{1}{kt}
\left[(tz^2+1)^k-1\right].
\end{aligned}
\]

Let \(a=e^{i\theta}\), where \(\theta\in \mathbb{R}\). Then
\[
\begin{aligned}
g_t(z)=g_t(a)
&\iff
(tz^2+1)^k=(ta^2+1)^k \\
&\iff
\left(z^2+\frac{1}{t}\right)^k
=
\left(a^2+\frac{1}{t}\right)^k.
\end{aligned}
\]

Therefore,
$
z^2+\frac{1}{t}
=
r^m
\left(a^2+\frac{1}{t}\right),
 m=0,1,\dots,k-1,
$
where $r=e^{\frac{2\pi i}{k}}.$
Equivalently,
\[
z^2
=
r^m
\left(a^2+\frac{1}{t}\right)
-\frac{1}{t},
\qquad m=0,1,\dots,k-1.
\]

Our goal is to determine the values of \(t\) for which \(|z|=1\).

If \(m=0\), then \(|z|=1\) for every \(t>0\). Hence we may assume \(m\neq 0\). In this case,
\[
\begin{aligned}
z^2
&=
e^{\frac{2\pi im}{k}}
\left(e^{2i\theta}+\frac{1}{t}\right)
-\frac{1}{t} \\
&=
e^{\left(2\theta+\frac{2\pi m}{k}\right)i}
+
\frac{1}{t}e^{\frac{2\pi im}{k}}
-
\frac{1}{t}.
\end{aligned}
\]

Consequently,
\[
\begin{aligned}
|z|^4
&=
\left(
\cos\left(2\theta+\frac{2\pi m}{k}\right)
+
\frac{1}{t}\cos\left(\frac{2\pi m}{k}\right)
-
\frac{1}{t}
\right)^2
\\
&\quad +
\left(
\sin\left(2\theta+\frac{2\pi m}{k}\right)
+
\frac{1}{t}\sin\left(\frac{2\pi m}{k}\right)
\right)^2 \\
&=
1
+
\frac{4}{t^2}
\sin^2\left(\frac{\pi m}{k}\right)
+
\frac{4}{t}
\sin\left(
2\theta+\frac{\pi m}{k}
\right)
\sin\left(\frac{\pi m}{k}\right).
\end{aligned}
\]

Thus, $|z|=1$ if and only if
\[
\frac{4}{t^2}
\sin^2\left(\frac{\pi m}{k}\right)
+
\frac{4}{t}
\sin\left(
2\theta+\frac{\pi m}{k}
\right)
\sin\left(\frac{\pi m}{k}\right)
=
0.
\]

Equivalently,
\[
\frac{1}{t}
=
-
\frac{
\sin\left(
2\theta+\frac{\pi m}{k}
\right)
}{
\sin\left(\frac{\pi m}{k}\right)
}.
\]

We now determine the maximal possible range of \(t\). Since
$-1
\leq
-
\sin\left(
2\theta+\frac{\pi m}{k}
\right)
\leq
1,$ 
and $\sin\left(\frac{\pi m}{k}\right)>0,$ for $m=1,2,\dots,k-1,$ it follows that
\[
\frac{-1}{\sin\left(\frac{\pi m}{k}\right)}
\leq
\frac{1}{t}
\leq
\frac{1}{\sin\left(\frac{\pi m}{k}\right)}.
\]

Since \(t>0\), this implies $t\geq\sin\left(\frac{\pi m}{k}\right)$. Hence, for each $m\in \{1,2,\dots,k-1\},$ there exists \(a\in \partial\mathbb{D}\) such that the equation $g_t(z)=g_t(a)$ has a solution satisfying \(|z|=1\) whenever $t\geq\sin\left(\frac{m\pi}{k}\right).$ Consequently, for every \(a\in \partial\mathbb{D}\), any solution \(z\) of $g_t(z)=g_t(a)$ other than \(\pm a\) satisfies $|z|\neq 1$ provided
\[
t
<
\min\left\{
\sin\left(\frac{m\pi}{k}\right)
:
m=1,2,\dots,k-1
\right\}
=
\sin\left(\frac{\pi}{k}\right).
\]
This completes the proof of Lemma~\ref{L:Roots_g_t}.  
\end{proof}

\begin{theorem}\label{lll}
For $k\geq 3$, and for every \(\delta>0\), the polynomial hull of $S_t \cap \overline{\mathbb{B}(0,\delta)}$ contains a \((2k-3)\)-parameter family of analytic discs with boundary contained in \(S_t\), provided $0<t<\min\left\{\sin\left(\frac{\pi}{k}\right),\frac{1}{k-2}\right\}.$ Moreover, this family fills an open neighborhood of \(0\in \mathbb{C}^2\).
\end{theorem}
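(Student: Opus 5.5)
We will apply Result~\ref{llll}(ii) to $F=p_k$ with $p_k(z,\bar z)=\sum_{m=1}^k t^{m-1}\frac{\binom{k}{m}}{k}z^{k-m}\bar z^m$, that is, to the surface $\widetilde{S_t}$. This surface is biholomorphic to $S_t$, so the conclusion transfers to $S_t$. Note that $\widetilde{S_t}$ is obtained from $S_t$ by the linear map $(z,w)\mapsto(z,\frac{1}{kt}(w-z^k))$ up to scaling. This map sends analytic discs with boundary in one surface to analytic discs with boundary in the other, preserves polynomial hulls, and preserves the dimension of the family and the property of filling a neighbourhood of $0$. The hypotheses of Result~\ref{llll} not involving $(\ast\ast)$ were verified in the proof of Theorem~\ref{T:Not_Poly_Convx}.

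For $0<t<1$ the Maslov-type index is $j=k-1$, so $0<j<k$ and $(2j-1)=2k-3$. The function $\operatorname{Re}(p_k/z^{k-2})$ has Laplacian bounded below by $(1-(k-2)t)(1+t)^{k-2}>0$ on $\mathbb{C}\setminus\{0\}$ for $t<\frac{1}{k-2}$. Hence it is strictly subharmonic, and in particular subharmonic and nowhere harmonic. The hypothesis $t<\min\{\sin(\pi/k),\frac{1}{k-2}\}$ already forces $t<1$. Moreover $p_k$ is a polynomial, hence smooth away from $0$, and $0$ is an isolated CR singularity since $\partial p_k/\partial\bar z=(z+t\bar z)^{k-1}$ vanishes only at $0$ when $t\ne1$.

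The remaining step, and the main one, is Property $(\ast\ast)$ for $\mathcal{C}(z)=p_k(z,\bar z)/z^k$ on $S^1$, with the threshold $\frac{\pi}{k-j+1}=\frac{\pi}{2}$. On $|z|=1$ we have $\bar z=z^{-1}$, so $\mathcal{C}(z)=\sum_m t^{m-1}\frac{\binom km}{k}\bar z^{2m}=g_t(\bar z)$, with $g_t$ as in Lemma~\ref{L:Roots_g_t}. Since $z\mapsto\bar z$ is a bijection of the circle that preserves arc lengths, $\mathcal{C}(z_1)=\mathcal{C}(z_2)$ with $z_1\neq z_2$ is equivalent to $g_t(\bar z_1)=g_t(\bar z_2)$. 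By Lemma~\ref{L:Roots_g_t}, for $0<t<\sin(\pi/k)$ the only solutions of $g_t(z)=g_t(a)$ on $\partial\mathbb{D}$ are $z=\pm a$, because $g_t$ is even, so $\pm a$ are always solutions and these exhaust the two solutions. Hence $z_2=-z_1$. Then $z_1,z_2$ split the circle into two arcs of length $\pi\ge\pi/2$, so $(\ast\ast)$ holds.

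Result~\ref{llll}(ii) then gives, for every $r>0$, a $(2k-3)$-parameter family of analytic discs in the hull of $\widetilde{S_t}\cap\overline{B(0;r)}$ with boundaries in $\widetilde{S_t}$. Since $j=k-1\ge2>1$, this family fills an open neighbourhood of the origin. Transporting back through the biholomorphism gives the statement for $S_t$. For the version of Theorem~\ref{ppp} with error terms one would invoke Result~\ref{p} in the same way.
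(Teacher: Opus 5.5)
Your proposal is correct and follows the paper's proof. You apply Result~\ref{llll}(ii) to $\widetilde{S_t}$ with Maslov-type index $j=k-1$, and you verify Property $(\ast\ast)$ via Lemma~\ref{L:Roots_g_t}, which forces $z_2=-z_1$ and hence two arcs of length $\pi\geq\pi/2$.

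One harmless slip: the map $(z,w)\mapsto\bigl(z,\frac{1}{kt}(w-z^k)\bigr)$ is a polynomial automorphism of $\mathbb{C}^2$, not a linear map. It carries $S_t$ exactly onto $\widetilde{S_t}$ with no further scaling, which is all your transfer argument needs.
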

\begin{proof}
  Now we use part \((ii)\) of Result~\ref{llll}. Consider the curve on the unit circle defined by
\[
\mathcal{C}_t(z)
=
\frac{p_k(z,\bar z)}{z^k}
=
\sum_{m=1}^{k} t^{m-1}\frac{\binom{k}{m}}{k}
\left(\frac{\bar z}{z}\right)^m
=
\sum_{m=1}^{k} t^{m-1}\frac{\binom{k}{m}}{k}\bar z^{2m},
\]
where we used the fact that \(|z|=1\).
Since \(t\in \mathbb{R}\), we have $\overline{\mathcal{C}_t(z)}=\mathcal{C}_t(\bar z).$

 By Lemma~\ref{L:Roots_g_t}, if $0\leq t<\sin\left(\frac{\pi}{k}\right)$ and \(z_1,z_2\in \partial\mathbb{D}\) satisfy $\mathcal{C}_t(z_1)=\mathcal{C}_t(z_2),$ then necessarily $z_1=-z_2.$ 
 
Hence, \(z_1\) and \(z_2\) divide the unit circle into two arcs, each of length \(\pi>\frac{\pi}{2}\).

Since
\[
\min\left\{
\sin\left(\frac{\pi}{k}\right),
\frac{1}{k-2}
\right\}
=
\frac{1}{k-2},
\qquad k> 3,
\]
part \((ii)\) of Result~\ref{llll} yields the desired conclusion.  
\end{proof}

Wiegerinck~\cite{Wiegerinck} showed that the attachment of analytic discs near an isolated CR singularity of a totally real surface persists under sufficiently small perturbations. Consequently, the above results remain valid when \(S_t\) is replaced by \(M_t\). We obtain the proof of our main theorem.

\begin{proof}[Proof of Theorem \ref{ppp}]
    As computed in the proof of Theorem~\ref{T:Not_Poly_Convx},
\[
\operatorname{Ind}_{\mathcal{M}}(S_t,0)
=
\begin{cases}
k-1, & \text{if } t<1,\\[4pt]
-(k-1), & \text{if } t>1.
\end{cases}
\]
We also know that
\[
\phi_k(z)
=
\operatorname{Re}\left(\frac{p_k(z,\bar z)}{z^{k-2}}\right)
\]
is strictly subharmonic in a punctured neighborhood of the origin if
$0<t<\frac{1}{k-2}$. Since a small $\mathcal{C}^k$-perturbation of a strictly subharmonic function is again strictly subharmonic and $R(z,\bar z)=o(|z|^k)$ near the origin, we obtain that $\operatorname{Re}\!\left(\frac{p_k(z,\bar z)+R(z,\bar z)}{z^{k-2}}\right)$ is strictly subharmonic in a small deleted neighborhood of the origin. Therefore, by Result~\ref{th1}, we conclude that there is an analytic disc with boundary in $M_t$. Hence, $M_t$ is not locally polynomially convex at the origin if
$0<t<\frac{1}{k-2}$.

Furthermore, by the proof of Theorem~\ref{lll}, the curve $\mathcal{C}_t$ satisfies the hypotheses of Result~\ref{p} whenever
$t<\min\left\{\sin\left(\frac{\pi}{k}\right),\frac{1}{k-2}\right\}$.
Therefore, Result~\ref{p} implies that the polynomially convex hull of $M_t$ contains a $(2k-3)$-parameter family of analytic discs with boundaries contained in $M_t$ if
$0<t<\min\left\{\sin\left(\frac{\pi}{k}\right),\frac{1}{k-2}\right\}$.
Moreover, this family fills an open neighborhood of the origin in $\mathbb{C}^2$.
\end{proof}

\begin{proof}[Proof of Theorem~\ref{thm-k4}]  
 We use Result~$\ref{coro1}$ to prove this theorem.
	Let $\varphi_t(z,\bar z)
	=
	\operatorname{Re}\left(\frac{p(z,\bar z)}{z^2}\right), z=re^{i\theta}.$
	Then
	\[
	\begin{aligned}
		\frac{\partial^2 \varphi_t(z,\bar z)}
		{\partial z\,\partial \bar z}
		&=
		1-\sum^{4}_{m=2}t^{m-1}\frac{m(m-2)}{8}\binom{4}{m}
		\left[
		\left(\frac{z}{\bar z}\right)^{m-1}
		+
		\left(\frac{\bar z}{z}\right)^{m-1}
		\right] \\
		&=
		1-3t^2\cos(4\theta)-2t^3\cos(6\theta) \\
		&=: f(t,\theta).
	\end{aligned}
	\]
	
	Now,
	$
	f\left(t,\frac{\pi}{2}\right)
	=
	1-3t^2+2t^3
	>
	0,
	\forall\, t\in
	[\frac{1}{\sqrt{2}},1).
	$
	By continuity of \(f\), for every $t\in [\frac{1}{\sqrt{2}},1),
	$
	there exists \(\varepsilon_t>0\) such that
	$
	f(t,\theta)>0,
	\forall\,\theta\in
	\left(
	\frac{\pi}{2}-\varepsilon_t,
	\frac{\pi}{2}+\varepsilon_t
	\right)
	=:\tilde{E_t}.
	$
	
	Let $E_t
	=
	\{z\in\mathbb{C}:Arg(z)\in \tilde{E_t}\}.$
	Then \(\varphi_t(z,\bar z)\) is strictly subharmonic on the sector \(E_t\). 
	Our aim is to show that there exist $z_1,z_2\in E_t\cap \partial\mathbb{D}$ such that $\mathcal{C}_t(z_1)=\mathcal{C}_t(z_2),$ and $|Arg(z_1)-Arg(z_2)|<\frac{\pi}{2},$
	where $\mathcal{C}_t(z)
	=
	\frac{p(z,\bar z)}{z^4},  z\in \partial\mathbb{D}.$
	Consider
	\[
	g_t(z)
	=
	\sum_{m=1}^{4}
	t^{m-1}\frac{\binom{4}{m}}{4}z^{2m}.
	\]
	Then $g_t(z)=
	\frac{1}{4t}
	\left[(tz^2+1)^4-1\right].$
	We know that for every
	$
	1> t\geq \sin\left(\frac{\pi}{4}\right),
	$
	there exists \(a=e^{i\theta_0}\in \partial\mathbb{D}\) such that
	$
	g_t(z)=g_t(a)
	$
	has two additional solutions on the unit circle other than \(a\) and \(-a\). These solutions are obtained from
	\begin{equation}\label{equ}
		\sin\left(2\theta+\frac{\pi}{4}\right)
		=
		-\frac{\sin(\pi/4)}{t}.
		\tag{$\ast\ast\ast$}
	\end{equation}
	
Since $\frac{1}{\sqrt{2}}\leq t<1,$ that implies $\theta\in (\frac{\pi}{2},\frac{5\pi}{8}]\cup [\frac{5\pi}{8},\frac{3\pi}{4}).$
Since we are interested in solutions near \(\pi/2\), we choose
$
	\theta_0\in
	\left(
	\frac{\pi}{2},
	\frac{5\pi}{8}
	\right].
$
Equation (\ref{equ}) also implies
	 $\sin(2\theta)
	=
	\frac{-1\pm \sqrt{2t^2-1}}{2t},$
	and consequently, $\cos(2\theta)
	=
	\frac{-1\mp \sqrt{2t^2-1}}{2t}.$
There are two possible choices of $(\sin(2\theta),\cos(2\theta))$.

	Since $	\theta_0\in
	\left(
	\frac{\pi}{2},
	\frac{5\pi}{8}
	\right],$
	we obtain
	\[
	\sin(2\theta_0)
	=
	\frac{-1+\sqrt{2t^2-1}}{2t},
	\qquad
	\cos(2\theta_0)
	=
	\frac{-1-\sqrt{2t^2-1}}{2t}.
	\]
The remaining two solutions of $g_t(z)=g_t(a)$	on the unit circle other than $\pm a$,  are determined by
	\[
	\begin{aligned}
		z^2
		=
		e^{\frac{2\pi i}{4}}
		\left(e^{2i\theta_0}+\frac{1}{t}\right)
		-\frac{1}{t} 
		=e^{(2\pi-2\theta_0)i}.
	\end{aligned}
	\]
\begin{claim}
	$	z_1=e^{i\theta_0}$ and $z_2=e^{(\pi-\theta_0)i}.$
\end{claim}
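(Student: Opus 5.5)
The claim asserts that, with $a=e^{i\theta_0}$ and $\theta_0\in\left(\frac{\pi}{2},\frac{5\pi}{8}\right]$ as chosen above, the two points $z_1=e^{i\theta_0}$ and $z_2=e^{(\pi-\theta_0)i}$ are the pair we need: both lie on $\partial\mathbb{D}$, they are distinct, and $\mathcal{C}_t(z_1)=\mathcal{C}_t(z_2)$. The plan is to first compute the extra roots of $g_t(z)=g_t(a)$ explicitly, and then transfer this to the curve $\mathcal{C}_t$ via $\mathcal{C}_t(z)=g_t(\bar z)$ on $\partial\mathbb{D}$. This identity holds because $\bar z/z=\bar z^2$ for $|z|=1$.

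\emph{Step 1: identify $z^2$.} Take the displayed value
\[
\sin(2\theta_0)=\frac{-1+\sqrt{2t^2-1}}{2t},\qquad \cos(2\theta_0)=\frac{-1-\sqrt{2t^2-1}}{2t},
\]
and expand $z^2=i\left(e^{2i\theta_0}+\frac1t\right)-\frac1t$.
\begin{itemize}
\item The real part is $-\sin(2\theta_0)-\frac1t=\frac{-1-\sqrt{2t^2-1}}{2t}=\cos(2\theta_0)$.
\item The imaginary part is $\cos(2\theta_0)+\frac1t=\frac{1-\sqrt{2t^2-1}}{2t}=-\sin(2\theta_0)$.
\end{itemize}
Hence $z^2=e^{-2i\theta_0}=e^{(2\pi-2\theta_0)i}$. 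So the extra solutions of $g_t(z)=g_t(e^{i\theta_0})$ are $z=\pm e^{-i\theta_0}$, which gives
\[
g_t(e^{-i\theta_0})=g_t(e^{i\theta_0}).
\]

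\emph{Step 2: pass to $\mathcal{C}_t$.} Since $g_t$ is even,
\[
\mathcal{C}_t(e^{i\theta_0})=g_t(e^{-i\theta_0})=g_t(e^{i\theta_0}).
\]
Likewise,
\[
\mathcal{C}_t(e^{(\pi-\theta_0)i})=g_t(e^{-(\pi-\theta_0)i})=g_t(-e^{i\theta_0})=g_t(e^{i\theta_0}).
\]
Therefore $\mathcal{C}_t(z_1)=\mathcal{C}_t(z_2)$. The two points are distinct because $\theta_0\neq\frac{\pi}{2}$.

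\emph{Step 3: arc-length bound.} The arguments differ by $2\theta_0-\pi\in\left(0,\frac{\pi}{4}\right]$. This is less than $\frac{\pi}{k-j+1}=\frac{\pi}{2}$, since here $k=4$ and $j=k-1=3$. Both points are symmetric about $\frac{\pi}{2}$.

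\emph{Main obstacle.} The delicate step is ensuring that both points and the separating arc lie inside the sector $E_t$ on which $\varphi_t$ is strictly subharmonic, so that Result~\ref{coro1} applies. As $t\to1$, $\theta_0\to\frac{\pi}{2}$. Near $t=\frac{1}{\sqrt2}$, however, $\theta_0$ approaches $\frac{5\pi}{8}$, so the choice of $\varepsilon_t$ must be checked rather than taken from continuity alone.

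To handle this, I would verify directly that
\[
f(t,\theta)=1-3t^2\cos 4\theta-2t^3\cos 6\theta>0
\]
on the whole interval $[\pi-\theta_0,\theta_0]$. The approach is to bound $\cos 4\theta$ and $\cos 6\theta$ there, and to enlarge $\tilde E_t$ accordingly.
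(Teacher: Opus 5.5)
Your proof is correct and follows the paper's own route. The paper leaves this step implicit: the displayed values of $\sin(2\theta_0)$ and $\cos(2\theta_0)$ give $z^2=e^{(2\pi-2\theta_0)i}$, so $\pm e^{-i\theta_0}$ solve $g_t(z)=g_t(e^{i\theta_0})$, and $\mathcal{C}_t(z)=g_t(\bar z)$ on $\partial\mathbb{D}$ together with the evenness of $g_t$ gives $\mathcal{C}_t(e^{i\theta_0})=\mathcal{C}_t(e^{(\pi-\theta_0)i})$. Your closing remark about the sector $E_t$ concerns the next step rather than the claim, and the paper handles it along the lines you suggest: it uses $f(t,\theta)=f(t,\pi-\theta)$ and checks directly that $\tilde f(t,x)>0$ for $x=\cos 2\theta$ between $-1$ and $\cos 2\theta_0$, instead of relying on continuity.
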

Now, $|\theta_0-(\pi-\theta_0)|
=
|2\theta_0-\pi|
<
\frac{\pi}{2},$ since $\theta_0\in
\left(
\frac{\pi}{2},
\frac{5\pi}{8}
\right].$
It therefore remains to show that $z_1,z_2\in E_t.$ Since $	f(t,\theta)=f(t,\pi-\theta), \forall\,\theta\in\mathbb{R},$ it suffices to prove that
$\theta_0\in \tilde{E_t}.$

Now, $f\!\left(t,\frac{\pi}{2}\right)>0,
\forall\, t\in
[\frac1{\sqrt2},1),$
and
	\[
	\begin{aligned}
		f(t,\theta_0)
		&=
		1-3t^2\cos(4\theta_0)-2t^3\cos(6\theta_0) \\
		&= -1+3t^2-(1+t^2)\sqrt{2t^2-1}.
	\end{aligned}
	\]
Now, for each $	t\in
[\frac1{\sqrt2},1),$
if we can show that $	f(t,\theta)>0,
\forall\, \theta\in
\left(\frac{\pi}{2},\theta_0\right),$
then the proof is complete.
	
	Observe that
	\[
	f(t,\theta)
	=
	1+3t^2
	+6t^3\cos 2\theta
	-6t^2\cos^2 2\theta
	-8t^3\cos^3 2\theta.
	\]
	
	Since $	2\theta_0\in
	\left(\pi,\frac{5\pi}{4}\right],$
	where \(\cos\) is strictly increasing. Let $	x=\cos 2\theta,$
	and define
	\[
	\tilde{f}(t,x)
	=
	1+3t^2+6t^3x-6t^2x^2-8t^3x^3.
	\]
	
	By an elementary calculation we can show that
	\[
	\tilde{f}(t,x)>0,
	\qquad
	\forall\, x\in
	\left(-1,\frac{-1-\sqrt{2t^2-1}}{2t}\right).
	\]
Hence, this  proves the theorem.
	
\end{proof}

\begin{remark}
Suppose $t<\sin\left(\frac{\pi}{4}\right).$ Then the equation \[\sin\left(2\theta+\frac{\pi}{4}\right)
=
-\frac{\sin(\pi/4)}{t}\] has no solution.
Hence, if \(z_1,z_2\in S^1\) satisfy $\mathcal{C}_t(z_1)=\mathcal{C}_t(z_2),$ the only possible relation is $z_1=-z_2.$
However,
\[
|Arg(z_1)-Arg(z_2)|\nleq \frac{\pi}{2}.
\]
Therefore Result~\ref{coro1} cannot be applied when $t\in\left[\frac12,\sin\left(\frac{\pi}{4}\right)\right).$
\end{remark}

\begin{remark}
Assume that \(k\geq 5\). Then there exist sectors on which $\operatorname{Re}\left(\frac{p_k(z,\bar z)}{z^{\,j-1}}\right)$ is strictly subharmonic.
If $t<\sin\left(\frac{\pi}{k}\right),$ then, similarly as above, the only possibility is $z_2=-z_1,$ which again does not allow us to apply the corollary.
On the other hand, if $\sin\left(\frac{\pi}{k}\right)\leq t,$ then, by arguments analogous to those used in the previous theorem, we obtain additional solutions of the equation $\mathcal{C}_t(z_1)=\mathcal{C}_t(z_2).$
Nevertheless, these solutions do not lie in the same sector in which $\operatorname{Re}\left(\frac{p_k(z,\bar z)}{z^{\,j-1}}\right)$ is strictly subharmonic.
\end{remark}

\section{Local polynomial convexity}
In this section, we investigate the local polynomial convexity of $M_t$ at the origin. We begin by studying the surface $S_t$, which leads to the following theorem.
\begin{theorem}\label{T:Without error term}
Fix \(k \in \mathbb{N}\) with \(k\geq3\). Then the surface \(S_t\) is locally polynomially convex at the origin if $t > cosec\left(\frac{\pi}{k}\right).$ 
\end{theorem}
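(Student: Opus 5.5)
The plan is to pass to a union of totally real planes via a proper holomorphic map, and then separate those planes by a single polynomial and Kallin's lemma (Result~\ref{R:Sto_apprx}).

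\emph{Step 1: pull back.} I would take $\Phi(z,w)=(z,(z+tw)^k)$. This map is proper: if $z$ and $(z+tw)^k$ are bounded, then so is $w$. We have
$\Phi^{-1}(S_t)=\{(z,w): (z+tw)^k=(z+t\bar z)^k\}$. Arguing exactly as in the converse part of the proof of Theorem~\ref{coeff relation}, this set equals $\bigcup_{j=0}^{k-1}X_j$, where
\[
X_j=\Bigl\{(z,w): w=c_j z+r^j\bar z\Bigr\},\qquad c_j=\frac{r^j-1}{t},\qquad r=e^{2\pi i/k}.
\]
Note that $|c_j|=\frac{2\sin(\pi j/k)}{t}$. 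By Result~\ref{R:Sto_propr}, applied to $K=S_t\cap\overline{B(0,\delta)}$, it suffices to show that the compact set $\Phi^{-1}(K)=\bigcup_j (X_j\cap \Phi^{-1}(\overline{B(0,\delta)}))$ is polynomially convex. Each piece $K_j:=X_j\cap\Phi^{-1}(\overline{B(0,\delta)})$ is a compact subset of a totally real plane. It is in fact a compact convex subset, or can be arranged to be one after shrinking to a suitable compact neighbourhood, so each $K_j$ is polynomially convex.

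\emph{Step 2: a separating polynomial.} I would look for a low-degree polynomial $P$ that sends each $X_j$ into a closed sector $\Sigma_j$ with vertex $0$, such that the sectors $\Sigma_j$ are pairwise disjoint away from $0$ and their union does not wind around $0$. The first candidate is $P(z,w)=zw$ (possibly composed with a rotation or a power). On $X_j$ it gives
\[
P=r^j|z|^2+c_j z^2,
\]
which lies in the closed sector of direction $\arg r^j=2\pi j/k$ and half-angle $\arcsin|c_j|$, provided $|c_j|<1$. The hypothesis $t>\operatorname{cosec}(\pi/k)$ is exactly what I expect to make these half-angles small enough to keep neighbouring sectors apart. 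If $zw$ is not quite right, I would modify $P$ to $w(z+\lambda w)$ or use a suitable power of a linear form, tuning the parameters so that the angular widths are controlled by $\sin(\pi j/k)/t$. This step is the main obstacle: the sector estimates must be sharp enough to produce exactly the threshold $\operatorname{cosec}(\pi/k)$, and one must check that the condition on $|c_j|$ is needed uniformly in $j$.

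\emph{Step 3: Kallin induction.} I would apply Result~\ref{R:Sto_apprx} inductively, with $X_1=K_0\cup\dots\cup K_{m}$ and $X_2=K_{m+1}$. Each image $P(K_j)$ is a compact set that is starlike with respect to $0$ and contained in $\Sigma_j$. Hence the union of the images over any subfamily has connected complement, so it is its own polynomial hull in $\mathbb{C}$ and is contained in $\bigcup_{j\le m}\Sigma_j$. Therefore $\widehat{P(X_1)}\cap\widehat{P(X_2)}\subset\{0\}$. Moreover, on $X_j$ the condition $P=0$ forces $|z|^2=|c_j||z|^2$, which gives $z=0$ because $|c_j|<1$. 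So $P^{-1}(0)\cap(X_1\cup X_2)=\{0\}$, which is polynomially convex. Induction then shows that $\bigcup_j K_j$ is polynomially convex, and Result~\ref{R:Sto_propr} yields local polynomial convexity of $S_t$ at the origin.
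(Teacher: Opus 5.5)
Your Steps 1 and 3 are sound. Your $\Phi$ is the paper's map $\Psi(z,u)=(z,u^k)$ precomposed with the linear change $u=z+tw$. Each $K_j$ is indeed a compact convex subset of a totally real plane. Your two-sets-at-a-time use of Kallin's lemma is fine, and in fact more careful than applying Result~\ref{R:Sto_apprx} to all $k$ pieces at once.

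The genuine gap is Step 2, which is the heart of the proof and which you leave open. With your candidate $P=zw$, the image of $X_j$ lies in a sector of half-angle $\arcsin\bigl(2\sin(\pi j/k)/t\bigr)$. This depends on $j$ and is largest for $j$ near $k/2$, and $t>\operatorname{cosec}(\pi/k)$ does not make these sectors fit. Already for $k=3$ one has $|c_1|=|c_2|=\sqrt3/t$:
\begin{itemize}
\item for $2/\sqrt3<t<\sqrt3$ the circle $r+c_1e^{2i\theta}$ winds around $0$, so $P(K_1)$ contains a neighbourhood of $0$ and meets $P(K_0)\subset[0,\infty)$ in a segment;
\item even for $t>\sqrt3$, keeping the sectors of $X_1$ and $X_2$ apart requires $2\arcsin(\sqrt3/t)<2\pi/3$, i.e.\ $t>2$.
\end{itemize}
For large $k$ this choice only works for $t$ roughly above $2\operatorname{cosec}(\pi/k)$.

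Your fallbacks are not carried out. The polynomial $w(z+\lambda w)$ is left untested. A power of a linear form cannot work: it sends $K_j$ onto a neighbourhood of $0$ for every plane on which the form is nondegenerate, and since $t>1$ the form degenerates on at most two of the $k$ planes.

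The missing idea is to take the polynomial adapted to the factorization $(z+tw)^k=(z+t\bar z)^k$, namely $P(z,w)=z(z+tw)$. On $X_j$ one has $z+tw=r^j(z+t\bar z)$, hence
\[
P=r^j\bigl(z^2+t|z|^2\bigr)=r^j\rho^2\bigl(t+e^{2i\theta}\bigr),\qquad z=\rho e^{i\theta}.
\]
So the $k$ images are the rotations by $2\pi j/k$ of one and the same sector of half-angle $\arcsin(1/t)$ about the positive real axis. These rotated sectors meet only at $0$ exactly when $\arcsin(1/t)<\pi/k$, i.e.\ $t>\operatorname{cosec}(\pi/k)$. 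Moreover $P^{-1}(0)\cap X_j=\{0\}$, since $z+t\bar z=0$ forces $z=0$ for $t\neq 1$.

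This is precisely the paper's argument, written in the coordinate $u=z+tw$: there the planes become $u=e^{2\pi ij/k}(z+t\bar z)$ and the separating polynomial is $zu$. With this $P$, your Step 3 goes through unchanged.
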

\begin{proof}
Let us consider the proper holomorphic map $\Psi: \mathbb{C}^2 \to \mathbb{C}^2$ defined by  $ \Psi(z,w) = (z, w^k).$ Then, we have  
  \[\Psi^{-1}\left(S_t \cap \overline{B(0,\delta)}\right) = \bigcup_{j=0}^{k-1} S_j^t,\]
  where  $ S_j^t := \left\{(z,w) \in \mathbb{C}^2 : w = \alpha^j (z + t\bar{z})\right\}\cap \overline{P(0,\delta_1)},$ $\alpha=e^{\frac{2\pi i}{k}}$, and $P(0,\delta_1)$ is a polydisk of radius $\delta_1=\left(\frac{\delta}{\sqrt{2}}, (\frac{\delta}{\sqrt{2}})^{\frac{1}{k}}\right).$ 

  To apply Result~\ref{R:Sto_apprx}, we consider the holomorphic polynomial $p(z,w) = zw$. Now, for $(z,w) \in S_0^t$,
	\[
	p(z,w) = z(z + t\bar{z}) = z^2 + t|z|^2.
	\]
	Let $z = r e^{i\theta}$. Then
	\[
	p(S_0^t) = \left\{ r^2 \bigl(t + e^{2i\theta}\bigr) : r \geq 0,\ \theta \in [0,2\pi] \right\}\cap \overline{D(0,\delta_2)}, \qquad \delta_2=\left(\frac{\delta}{\sqrt{2}}\right)^{\frac{1}{k}+1}.
	\]
	\noindent Clearly, for fixed $t$, $p(S_0^t)$ is a union of circles centered on the real axis.  
	As $t$ varies, $p(S_0^t)$ lies in a sector in the complex plane containing the positive real axis, say $\omega_0$. Similarly, each $p(S_j^t)$ is a sector $\omega_j$  obtained by rotating $\omega_0$ by an angle $\frac{2\pi j}{k}$.

    \smallskip
    
	\noindent Our goal is to determine the values of $t$ for which these $k$ sectors intersect only at the origin.  
	To compute the angular width of each sector, it suffices to compute the angle of the tangent to the circle $\left\{r^2\bigl(t + e^{2i\theta}\bigr):\ \theta \in [0,2\pi]\right\}$ that passes through the origin.
	Let $\varphi$ denote this angle. Then $2\varphi$ is the angular width of $p(S_0^t)$.

    \smallskip
	
	Let us define $f(\theta) = r^2\bigl(t + e^{2i\theta}\bigr),$ then $f'(\theta) = 2i r^2 e^{2i\theta}.$ The tangent line at $\theta$ is given by
	\[
	\ell(s) := r^2\bigl(t + e^{2i\theta}\bigr) + s \cdot 2i r^2 e^{2i\theta}, \quad s \in \mathbb{R}.
	\]
	\noindent Suppose that $\ell(s)$ passes through the origin. Then there exists $s_0 \in \mathbb{R}$ such that $t + e^{2i\theta} + 2i s_0 e^{2i\theta} = 0.$ Separating real and imaginary parts, we obtain
	\[
	t + \cos(2\theta) - 2 s_0 \sin(2\theta) = 0,
	\]
	\[
	\sin(2\theta) + 2 s_0 \cos(2\theta) = 0.
	\]
	From these, we deduce
	\[
	\cos(2\theta) = -\frac{1}{t}.
	\]
	
	Now define $\varphi$ by
	\[
	\tan \varphi = \frac{\sin(2\theta)}{t + \cos(2\theta)}.
	\]
	Using $\cos(2\theta) = -\frac{1}{t}$, we compute
	\[
	\tan \varphi 
	= \frac{\sin(2\theta)}{\cos^2(2\theta) - 1}
	= \frac{\sin(2\theta)\cos(2\theta)}{\cos^2(2\theta) - 1}
	= -\cot(2\theta).
	\]
	
\noindent Each sector $p(S_j^t)$ has uniform angular length $2\varphi$. To ensure that these sectors fit into $k$ disjoint sectors, we require 
    $2\varphi < \frac{2\pi}{k},
\ i.e. \ 
	\varphi < \frac{\pi}{k}.$ Thus, $\tan \varphi < \tan\left(\frac{\pi}{k}\right).$ Using $\tan\varphi = -\cot(2\theta)$, we get $-\cot(2\theta) < \tan\left(\frac{\pi}{k}\right).$ This leads to
	\[
	\frac{1}{\sqrt{t^2 - 1}} < \tan\left(\frac{\pi}{k}\right),
	\]
	and hence
	\[
	t >\operatorname{cosec}\left(\frac{\pi}{k}\right).
	\]

  \noindent Furthermore,  
  \begin{align*}
    p^{-1}\{0\} \cap \left(\bigcup_{j=0}^{k-1} S_j^t\right) = \{0\}
  \end{align*}  
  is polynomially convex.  

  \smallskip  

  \noindent Thus, we have established the following:  
  \begin{itemize}
    \item $p(S_j^t) \subset \omega_j$, where $\omega_j$ are angular sectors, and $\omega_j \cap \omega_l = \{0\},\quad j\ne l$.
    \item $p^{-1}\{0\} \cap \left(\bigcup_{j=0}^{k-1} S_j^t\right) = \{0\}$ is polynomially convex.
  \end{itemize}  

  \noindent Therefore, Result~\ref{R:Sto_apprx} gives that $\Psi^{-1}(S_t)$ is locally polynomially convex at the origin if $t >\operatorname{cosec}\left(\frac{\pi}{k}\right).$  Since $\Psi$ is a proper holomorphic map, we apply Result~\ref{R:Sto_propr} to conclude that  the surface \(S_t\) is locally polynomially convex at the origin if $t > cosec\left(\frac{\pi}{k}\right).$ 
\end{proof}

\begin{proof}[Proof of Theorem \ref{T:With error term}]
	Consider the proper holomorphic map $\Psi: \mathbb{C}^2 \to \mathbb{C}^2$ defined by  $\Psi(z,w) = (z, w^k).$ We have,	
\[
M_t=\left\{(z,w):\, w=(z+t\bar z)^k+o(|z|^k)\right\}\cap \overline{B(0,\delta)}.
\]

\noindent Then
\[
\Psi^{-1}(M_t)=\bigcup_{j=0}^{k-1} M_j^t,
\]
where
$M_j^t
=
\left\{
(z,w)\in \mathbb C^2:\,
w=\alpha^j\bigl((z+t\bar z)+H(z)\bigr)
\right\}
\cap \overline{P(0,\delta_1)},$  and $P(0,\delta_1)$ is a polydisk of radius $\delta_1=\left(\frac{\delta}{\sqrt{2}}, (\frac{\delta}{\sqrt{2}})^{\frac{1}{k}}\right)$
with $H(z)=o(|z|),$ $\alpha=e^{\frac{2\pi i}{k}}.$

Consider the holomorphic polynomial $p(z,w) = zw$. We know that $S_0^t=\{(z,w): w=(z+t\bar z)\}\cap\overline{P(0,\delta_1)}.$ Let $(z,w)\in M_0^t$. Then
\[
p(z,w)
=
z(z+t\bar z)+zH(z)
=:Q(z)+R(z),
\]
where $Q(z)=z(z+t\bar z)$ and $
R(z)=zH(z).$ For $\delta>0$, define
\[
m_\delta
=
\sup_{0<|z|<\frac{\delta}{\sqrt{2}}}\frac{|H(z)|}{|z|}.
\]
Since $H(z)=o(|z|)$, we have $m_\delta\to 0$ as $\delta\to0$.
Also, for $|z|<\frac{\delta}{\sqrt{2}}$,
\[
|R(z)|
\le m_\delta |z|^2,
\quad
|Q(z)|
=
|z|^2\,|e^{2i\theta}+t|
\ge (t-1)|z|^2.
\]

\noindent Hence
\[
\left|\frac{R(z)}{Q(z)}\right|
\le
\frac{m_\delta}{t-1},
\qquad 0<|z|<\frac{\delta}{\sqrt{2}}.
\]
\noindent Now, $p(z,w)=Q(z)\left(1+\frac{R(z)}{Q(z)}\right),$
and therefore
\[
|\arg p(z,w)-\arg Q(z)|
=
\left|\arg\left(1+\frac{R(z)}{Q(z)}\right)\right|.
\]

\noindent Using the elementary estimate $|\arg(1+\zeta)|\le 2|\zeta| $ for $|\zeta|<\frac12,$ and choosing $\delta>0$ sufficiently small so that $\frac{m_\delta}{t-1}<\frac12,$
we obtain
\[
|\arg p(z,w)-\arg Q(z)|
\le
2\left|\frac{R(z)}{Q(z)}\right|
\le
\frac{2m_\delta}{t-1}.
\]

\noindent Setting $\varepsilon_\delta
=
\frac{2m_\delta}{t-1}$ for a fixed $t>1$,
we conclude that
\[
|\arg p(z,w)-\arg Q(z)|
<
\varepsilon_\delta
\qquad \text{for all } (z,w)\in M_0^t.
\]

\noindent Since $m_\delta\to0$ as $\delta\to0$, it follows that $\varepsilon_\delta\to0\text{ as } \delta\to0.$ Note that $Q(z)\in p(S_0^t)$.

\smallskip

We have already seen that \(p(S_0^t)\) is inside a sector $\omega_0$, centered at the origin and the real axis divides the sector into two equal parts. Now \(p(M_0^t)\) is a perturbation of \(p(S_0^t)\). If \(2\varphi\) denotes the angular width of \(\omega_0\), then \(p(M_0^t)\) is contained in a sector $\tilde{\omega}_0$ $(\omega_0\subset \tilde{\omega}_0)$ with angular width is slightly larger; say it is $2(\varphi+\varepsilon_\delta),$ where the perturbation term \(\varepsilon_\delta\) depends on \(\delta\). Since each \(p(M_j^t)\) has the same angular width \(2(\varphi+\varepsilon_\delta)\), these sectors fit into \(k\) pairwise disjoint sectors $\tilde{\omega}_j$; provided that
\[
2(\varphi+\varepsilon_\delta)<\frac{2\pi}{k},
\]
that is,
\[
\varphi+\varepsilon_\delta<\frac{\pi}{k},
\]
or equivalently,
\[
\varphi<\frac{\pi}{k}-\varepsilon_\delta.
\]

\noindent By calculations similar to those carried out in the proof of Theorem~\ref{T:Without error term}, we obtain
\[
t>cosec\left(\frac{\pi}{k}-\varepsilon_\delta\right).
\]

	\noindent Furthermore,  
	\begin{align*}
		p^{-1}\{0\} \cap \left(\bigcup_{j=0}^{k-1} M_j^t\right) = \{0\}
	\end{align*}  
	is polynomially convex.  
	
	\smallskip  
	
	\noindent Thus, we have established the following:  
	\begin{itemize}
		\item $p(M_j^t) \subset \tilde{\omega}_j$, where $\tilde{\omega}_j$ are angular sectors, and $\tilde{\omega}_j \cap \tilde{\omega}_l = \{0\},\quad j\ne l$.
		\item $p^{-1}\{0\} \cap \left(\bigcup_{j=0}^{k-1} M_j^t\right) = \{0\}$ is polynomially convex.
	\end{itemize}  
	
	\noindent 
Therefore, by Result~\ref{R:Sto_apprx}, we conclude that $\Psi^{-1}(M_t)$ is locally polynomially convex at the origin if $t>cosec\left(\frac{\pi}{k}-\varepsilon_\delta\right).$  Since $\Psi$ is a proper holomorphic map, we apply Result~\ref{R:Sto_propr} to conclude that  the surface \(M_t\) is locally polynomially convex at the origin if $t>cosec\left(\frac{\pi}{k}-\varepsilon_\delta\right).$ 
   
	Let $t>cosec\left(\frac{\pi}{k}\right).$ Then there exists \(\delta>0\) such that the perturbation term \(\varepsilon_\delta>0\) satisfies
\[
t>cosec\left(\frac{\pi}{k}-\varepsilon_\delta\right)
>
cosec\left(\frac{\pi}{k}\right).
\]

Hence \(M_t\) is locally polynomially convex at the origin whenever
$t>cosec\left(\frac{\pi}{k}\right)$.
\end{proof}

\noindent {\bf Acknowledgements.}  Sushil Gorai is partially supported by an ARG MATRICS grant (ANRF/ARGM/2025/002958/MTR). Suman Karak acknowledges the financial support of the NBHM, Department of Atomic Energy (DAE), Government of India, through the NBHM Doctoral Fellowship (Reference No.~0203/11/2023/R\&D-II/DAE/13950).  Golam Mostafa Mondal acknowledges the financial support of the NBHM, DAE, Government of India, under the NBHM Postdoctoral Fellowship (Reference No. 0204/16(4)/2024/R$\&$D-II/6748, dated 09.05.2024).

	\bibliographystyle{plain}
	\bibliography{biblio.bib}

@incollection {Harris1991,
    AUTHOR = {Harris, G. A.},
     TITLE = {Degenerate surfaces in {${\bf C}^2$}},
 BOOKTITLE = {Several complex variables and complex geometry, {P}art 3
              ({S}anta {C}ruz, {CA}, 1989)},
    SERIES = {Proc. Sympos. Pure Math.},
    VOLUME = {52, Part 3},
     PAGES = {179--189},
 PUBLISHER = {Amer. Math. Soc., Providence, RI},
      YEAR = {1991},
      ISBN = {0-8218-1491-5},
   MRCLASS = {32F25},
  MRNUMBER = {1128592},
MRREVIEWER = {Serge\ Ivashkovich},
       DOI = {10.1090/pspum/052.3/1128592},
       URL = {https://doi.org/10.1090/pspum/052.3/1128592},
}

@book {Sto07,
	AUTHOR = {Stout, E. L.},
	TITLE = {Polynomial convexity},
	SERIES = {Progress in Mathematics},
	VOLUME = {261},
	PUBLISHER = {Birkh\"{a}user Boston, Inc., Boston, MA},
	YEAR = {2007},
	PAGES = {xii+439},
	ISBN = {978-0-8176-4537-3; 0-8176-4537-3},
	MRCLASS = {32E20},
	MRNUMBER = {2305474},
	MRREVIEWER = {Marshall A. Whittlesey},
}

@article {SG2011,
    AUTHOR = {Gorai, S.},
     TITLE = {Local polynomial convexity of the union of two totally-real
              surfaces at their intersection},
   JOURNAL = {Manuscripta Math.},
  FJOURNAL = {Manuscripta Mathematica},
    VOLUME = {135},
      YEAR = {2011},
    NUMBER = {1-2},
     PAGES = {43--62},
      ISSN = {0025-2611,1432-1785},
   MRCLASS = {32E20 (32V40)},
  MRNUMBER = {2783386},
MRREVIEWER = {Marshall\ A.\ Whittlesey},
       DOI = {10.1007/s00229-010-0405-x},
       URL = {https://doi.org/10.1007/s00229-010-0405-x},
}

@inproceedings {Kallin65,
	AUTHOR = {Kallin, E.},
	TITLE = {Fat polynomially convex sets},
	BOOKTITLE = {Function {A}lgebras ({P}roc. {I}nternat. {S}ympos. on
	{F}unction {A}lgebras, {T}ulane {U}niv., 1965)},
	PAGES = {149--152},
	PUBLISHER = {Scott-Foresman, Chicago, Ill.},
	YEAR = {1966},
	MRCLASS = {32.70 (32.49)},
	MRNUMBER = {0194618},
	MRREVIEWER = {A. B. Willcox},
}

@article {Bharali2012,
    AUTHOR = {Bharali, G.},
     TITLE = {The local polynomial hull near a degenerate {CR} singularity:
              {B}ishop discs revisited},
   JOURNAL = {Math. Z.},
  FJOURNAL = {Mathematische Zeitschrift},
    VOLUME = {271},
      YEAR = {2012},
    NUMBER = {3-4},
     PAGES = {1043--1063},
      ISSN = {0025-5874},
   MRCLASS = {32E20 (32V20 46J10)},
  MRNUMBER = {2945596},
MRREVIEWER = {I. G. Kossovskiy},
       DOI = {10.1007/s00209-011-0902-y},
       URL = {https://doi.org/10.1007/s00209-011-0902-y},
}

@article {Bharali2011,
    AUTHOR = {Bharali, G.},
     TITLE = {Polynomial approximation, local polynomial convexity, and
              degenerate {CR} singularities---{II}},
   JOURNAL = {Internat. J. Math.},
  FJOURNAL = {International Journal of Mathematics},
    VOLUME = {22},
      YEAR = {2011},
    NUMBER = {12},
     PAGES = {1721--1733},
      ISSN = {0129-167X,1793-6519},
   MRCLASS = {32E20 (30E10 32A65 32V20)},
  MRNUMBER = {2872529},
MRREVIEWER = {I.\ G.\ Kossovskiy},
       DOI = {10.1142/S0129167X11007446},
       URL = {https://doi.org/10.1142/S0129167X11007446},
}

@article {Bharali2005,
    AUTHOR = {Bharali, G.},
     TITLE = {Surfaces with degenerate {CR} singularities that are locally
              polynomially convex},
   JOURNAL = {Michigan Math. J.},
  FJOURNAL = {Michigan Mathematical Journal},
    VOLUME = {53},
      YEAR = {2005},
    NUMBER = {2},
     PAGES = {429--445},
      ISSN = {0026-2285,1945-2365},
   MRCLASS = {32E20 (46J10)},
  MRNUMBER = {2152709},
MRREVIEWER = {Marshall\ A.\ Whittlesey},
       DOI = {10.1307/mmj/1123090777},
       URL = {https://doi.org/10.1307/mmj/1123090777},
}

@article {Bharali2006,
    AUTHOR = {Bharali, G.},
     TITLE = {Polynomial approximation, local polynomial convexity, and
              degenerate {CR} singularities},
   JOURNAL = {J. Funct. Anal.},
  FJOURNAL = {Journal of Functional Analysis},
    VOLUME = {236},
      YEAR = {2006},
    NUMBER = {1},
     PAGES = {351--368},
      ISSN = {0022-1236,1096-0783},
   MRCLASS = {32E20 (32V20 46J10)},
  MRNUMBER = {2227137},
MRREVIEWER = {Marshall\ A.\ Whittlesey},
       DOI = {10.1016/j.jfa.2006.02.001},
       URL = {https://doi.org/10.1016/j.jfa.2006.02.001},
}

@article {Wiegerinck,
    AUTHOR = {Wiegerinck, J.},
     TITLE = {Local polynomially convex hulls at degenerated {CR}
              singularities of surfaces in {$\mathbb{C}^2$}},
   JOURNAL = {Indiana Univ. Math. J.},
  FJOURNAL = {Indiana University Mathematics Journal},
    VOLUME = {44},
      YEAR = {1995},
    NUMBER = {3},
     PAGES = {897--915},
      ISSN = {0022-2518},
   MRCLASS = {32E20 (32F25)},
  MRNUMBER = {1375355},
MRREVIEWER = {Gary A. Harris},
       DOI = {10.1512/iumj.1995.44.2014},
       URL = {https://doi.org/10.1512/iumj.1995.44.2014},
}

@article {MR200476,
    AUTHOR = {Bishop, E.},
     TITLE = {Differentiable manifolds in complex {E}uclidean space},
   JOURNAL = {Duke Math. J.},
  FJOURNAL = {Duke Mathematical Journal},
    VOLUME = {32},
      YEAR = {1965},
     PAGES = {1--21},
      ISSN = {0012-7094,1547-7398},
   MRCLASS = {32.40},
  MRNUMBER = {200476},
MRREVIEWER = {P.\ Dolbeault},
       URL = {http://projecteuclid.org/euclid.dmj/1077375631},
}

@incollection {MR1128527,
    AUTHOR = {Thomas, P. J.},
     TITLE = {Unions minimales de {$n$}-plans r\'eels d'enveloppe \'egale
              \`a{} {$\mathbb{C}^n$}},
 BOOKTITLE = {Several complex variables and complex geometry, {P}art 1
              ({S}anta {C}ruz, {CA}, 1989)},
    SERIES = {Proc. Sympos. Pure Math.},
    VOLUME = {52, Part 1},
     PAGES = {233--244},
 PUBLISHER = {Amer. Math. Soc., Providence, RI},
      YEAR = {1991},
      ISBN = {0-8218-1489-3},
   MRCLASS = {32E20},
  MRNUMBER = {1128527},
MRREVIEWER = {Ahmed\ Zeriahi},
       DOI = {10.1090/pspum/052.1/1128527},
       URL = {https://doi.org/10.1090/pspum/052.1/1128527},
}

@article {MR1115074,
    AUTHOR = {Forstneri\v{c}, F. and Stout, E. L.},
     TITLE = {A new class of polynomially convex sets},
   JOURNAL = {Ark. Mat.},
  FJOURNAL = {Arkiv f\"or Matematik},
    VOLUME = {29},
      YEAR = {1991},
    NUMBER = {1},
     PAGES = {51--62},
      ISSN = {0004-2080,1871-2487},
   MRCLASS = {32E20 (32D15)},
  MRNUMBER = {1115074},
MRREVIEWER = {Guido\ Lupacciolu},
       DOI = {10.1007/BF02384330},
       URL = {https://doi.org/10.1007/BF02384330},
}

@article {MR1488338,
    AUTHOR = {J\"oricke, B.},
     TITLE = {Local polynomial hulls of discs near isolated parabolic
              points},
   JOURNAL = {Indiana Univ. Math. J.},
  FJOURNAL = {Indiana University Mathematics Journal},
    VOLUME = {46},
      YEAR = {1997},
    NUMBER = {3},
     PAGES = {789--826},
      ISSN = {0022-2518,1943-5258},
   MRCLASS = {32E20},
  MRNUMBER = {1488338},
MRREVIEWER = {Norman\ Levenberg},
       DOI = {10.1512/iumj.1997.46.1501},
       URL = {https://doi.org/10.1512/iumj.1997.46.1501},
}

@article {MR773068,
    AUTHOR = {Harris, G. A.},
     TITLE = {Lowest order invariants for real-analytic surfaces in {${\mathbb{C}}^2$}},
   JOURNAL = {Trans. Amer. Math. Soc.},
  FJOURNAL = {Transactions of the American Mathematical Society},
    VOLUME = {288},
      YEAR = {1985},
    NUMBER = {1},
     PAGES = {413--422},
      ISSN = {0002-9947,1088-6850},
   MRCLASS = {32F25},
  MRNUMBER = {773068},
MRREVIEWER = {Tosiaki\ Kori},
       DOI = {10.2307/2000447},
       URL = {https://doi.org/10.2307/2000447},
}

@article{MR0960837,
   author={Weinstock, B. M.},
   title={On the polynomial convexity of the union of two maximal totally
   real subspaces of $\mathbb{C}^n$},
   journal={Math. Ann.},
   volume={282},
   year={1988},
   number={1},
   pages={131--138},
   issn={0025-5831},
   review={\MR{0960837}},
   doi={10.1007/BF01457016},
}

@article{MR1832326,
   author={Garc\'ia, A. S. },
   title={Polynomial hulls of smooth discs: a survey},
   journal={Irish Math. Soc. Bull.},
   year={2000},
   date={2000},
   pages={135--153},
   review={\MR{1832326}},
}

@article{MR0894560,
   author={Forstneri\v{c}, F.},
   title={Analytic disks with boundaries in a maximal real submanifold of
   $\mathbb{C}^2$},
   language={English, with French summary},
   journal={Ann. Inst. Fourier (Grenoble)},
   volume={37},
   year={1987},
   number={1},
   pages={1--44},
   issn={0373-0956},
   review={\MR{0894560}},
   doi={10.5802/aif.1076},
}

@article{gorai2025certainrealsurfacesmathbbc2,
      title={Certain real surfaces in $\mathbb{C}^2$ with isolated singularities}, 
      JOURNAL = {to appear in Ann. Inst. Fourier (Grenoble)},
      author={Gorai, S.},
     year={2026},
      eprint={1909.04085},
      archivePrefix={arXiv},
      primaryClass={math.CV},
      url={https://arxiv.org/abs/1909.04085}, 
}

@article {MR3206683,
    AUTHOR = {Gorai, S.},
     TITLE = {A note on polynomial convexity of the union of finitely many
              totally-real planes in {$\mathbb{C}^2$}},
   JOURNAL = {J. Math. Anal. Appl.},
  FJOURNAL = {Journal of Mathematical Analysis and Applications},
    VOLUME = {418},
      YEAR = {2014},
    NUMBER = {2},
     PAGES = {842--851},
      ISSN = {0022-247X,1096-0813},
   MRCLASS = {32E20 (32V40)},
  MRNUMBER = {3206683},
MRREVIEWER = {Serge\ Ivashkovich},
       DOI = {10.1016/j.jmaa.2014.03.011},
       URL = {https://doi.org/10.1016/j.jmaa.2014.03.011},
}

@article {MR3123672,
    AUTHOR = {Gorai, S.},
     TITLE = {On the polynomial convexity of the union of three totally real
              planes in {$\mathbb{C}^2$}},
   JOURNAL = {Int. Math. Res. Not. IMRN},
  FJOURNAL = {International Mathematics Research Notices. IMRN},
      YEAR = {2013},
    NUMBER = {21},
     PAGES = {4985--5001},
      ISSN = {1073-7928,1687-0247},
   MRCLASS = {32E20},
  MRNUMBER = {3123672},
MRREVIEWER = {I.\ G.\ Kossovskiy},
       DOI = {10.1093/imrn/rns200},
       URL = {https://doi.org/10.1093/imrn/rns200},
}

@inproceedings {MR769507,
    AUTHOR = {O'Farrell, A. G. and Preskenis, K. J. and Walsh, D.},
     TITLE = {Holomorphic approximation in {L}ipschitz norms},
 BOOKTITLE = {Proceedings of the conference on {B}anach algebras and several
              complex variables ({N}ew {H}aven, {C}onn., 1983)},
    SERIES = {Contemp. Math.},
    VOLUME = {32},
     PAGES = {187--194},
 PUBLISHER = {Amer. Math. Soc., Providence, RI},
      YEAR = {1984},
      ISBN = {0-8218-5034-2},
   MRCLASS = {32E30 (32E05)},
  MRNUMBER = {769507},
MRREVIEWER = {Wies\l aw\ Ple\'sniak},
       DOI = {10.1090/conm/032/769507},
       URL = {https://doi.org/10.1090/conm/032/769507},
}

\end{document}